\newtheorem{remark}{Remark}
\newtheorem{example}{Example}
\definecolor{darkerblue}{rgb}{0.1,0,0.8}
\newcommand{\grad}{\nabla}
\newcommand{\uu}{\mathbf{u}}
\newcommand{\vv}{\mathbf{v}}
\newcommand{\ww}{\mathbf{w}}
\newcommand{\phiphi}{\boldsymbol{\phi}}
\newcommand{\nn}{\mathbf{n}}
\newcommand{\ff}{\mathbf{f}}
\newcommand{\ffone}{\mathbf{f}}
\newcommand{\fftwo}{\uu_D}
\newcommand{\ffthree}{\mathbf{t}}
\newcommand{\HH}{\mathbf{H}}
\newcommand{\LL}{\mathbf{L}}
\newcommand{\VV}{\mathbf{V}}
\newcommand{\coarsevol}{\nu}
\newcommand{\Hc}{H} 
\newcommand{\DW}{W} 
\newcommand{\DL}{L} 
\newcommand{\jump}[1]{\ensuremath{[#1]} }
\date{\today}
\title{On a robust inf-sup condition for the Stokes problem in slender domains -- with application to preconditioning
\thanks{
Submitted to the editors \today.
}
}
\author{Espen Sande\footnotemark[2] \footnotemark[1] \and Timo Koch\footnotemark[4] \footnotemark[2] \footnotemark[3]\and
  Miroslav Kuchta\footnotemark[2]\and
  Kent-Andre Mardal\footnotemark[2] \footnotemark[3]
}
\begin{document}

\maketitle

\renewcommand{\thefootnote}{\fnsymbol{footnote}}
\footnotetext[1]{Corresponding author}
\footnotetext[2]{Department of Numerical Analysis and Scientific Computing, Simula Research Laboratory, Oslo, Norway.
    \email{sande@simula.no}, \email{miroslav@simula.no}, \email{kent-and@simula.no}}
\footnotetext[3]{Department of Mathematics, University of Oslo, Oslo, Norway.}
\footnotetext[4]{Department of Hydromechanics and Modeling of Hydrosystems, University of Stuttgart, Stuttgart, Germany. \email{timo.koch@iws.uni-stuttgart.de}}

\renewcommand{\thefootnote}{\arabic{footnote}}

\begin{abstract}
  We identify a norm on the pressure variable in the Stokes equation that allows us to prove a continuous inf-sup condition with a constant independent of the domain's aspect ratio. This is in contrast to the standard inf-sup constant, which breaks down as the aspect ratio increases. We further apply our result to construct robust operator preconditioners for the Stokes problem in slender domains. Several numerical examples illustrate the theory.
\end{abstract}

\begin{keywords}
 Stokes equation, operator preconditioning, slender domains, aspect ratio, inf-sup constant.
\end{keywords}

\begin{AMS}
  35J57, 
  65F08, 
  65M60, 
  76D03, 
  76D07. 
\end{AMS}


\section{Introduction}\label{sec:intro}
Let us consider the following Stokes problem: 
\begin{subequations}
\begin{align}
\label{eq:stokes1}
-\Delta \uu - \nabla p &= \ffone, &&\mbox{ in } \Omega\subset\mathbb{R}^d, \\
\label{eq:stokes2}
\nabla \cdot \uu &= 0, &&\mbox{ in } \Omega, \\
\label{eq:stokes3}
 \uu &= \fftwo, &&\mbox{ on } \partial \Omega_D, \\ 
            \label{eq:stokes4}
            \nabla \uu \cdot \nn + p \nn &= \ffthree, &&\mbox{ on } \partial \Omega_N . 
\end{align}
\label{eq:stokes}
\end{subequations}

Here $\uu$ and $p$ are the unknown fluid velocity and a rescaled\footnote{We assume a constant fluid density and viscosity field, and the viscosity is absorbed in the rescaled pressure unknown. Boundary data and right-hand side are scaled accordingly. The pressure sign is flipped to obtain a symmetric Stokes operator.} pressure unknown, 
$\ffone$ represents rescaled body forces,  
$\fftwo$ the velocity data on the boundary, and $\ffthree$ represents a rescaled traction vector
on the boundary.
In the following, we consider the case where $\Omega$ is a slender domain characterized by a length scale $\DL$ and a width scale $\DW$. We will focus on large-aspect-ratio domains, $\DW \ll \DL$.

The Stokes problem, as a saddle-point problem, is associated with the so-called inf-sup condition at both the continuous and discrete levels. It is well known that the continuous condition breaks down (i.e., the problem becomes ill-posed in the standard function space setting; see \Cref{sec:fa}) as the domain aspect ratio tends to infinity under certain boundary conditions. The most commonly studied case is that of Dirichlet boundary conditions, $\partial \Omega = \partial \Omega_D$ and $\fftwo =0$, where the Bogovskii operator allows for an explicit calculation. 
Specifically, it is known that the inf-sup constant $C(\Omega)$ in 
\begin{equation*}
\sup_{\uu\in \HH^1_0} \frac{(\nabla \cdot \uu, q)}{|\uu|_1} \ge \frac{1}{C(\Omega)}\|q\|_0, \quad q \in L^2_0 
\end{equation*}
scales as the domain aspect ratio, $C(\Omega) \sim \DL/\DW$, and tends to infinity as the domain is stretched; see, e.g., \cite{acosta2017divergence,dobrowolski2003lbb,   dobrowolski2005lbb, horgan1983inequalities,  olshanskii2000best, velte1996inequalities} for more details.
Naively, one might expect that this breakdown of the inf-sup constant, and implied ill-posedness in the limit of stretched domains, points to the Stokes equation not being a good approximation of fluid flow on slender domains. However, the ubiquitous use of the Stokes equation on channel domains in engineering practice would contradict this. In this paper, we resolve this dilemma by showing that the Stokes equation is, in fact, well-posed on slender domains, with constants independent of the aspect ratio, when using a weaker norm for the pressure variable.

The inf-sup constant also plays a crucial role in the design of preconditioners.
The condition number of the preconditioned matrix and the corresponding estimated number of iterations when using a Krylov method scales as
the inf-sup constant squared~\cite{rusten1992preconditioned, wathen1993fast}.  An example where the inf-sup constant is detrimental to the
performance of a standard preconditioner (based on well-posedness of \eqref{eq:stokes} in $\HH^1\times L^2$) was shown in, e.g., \cite{dobrowolski2012domain} for $2d$ channel domains or, more recently, in \cite{bertoluzza2017boundary}, \cite[Chap 3]{mri2fem2} in the context of $3d$ simulations
of blood flow and flow of the cerebrospinal fluid in the brain, respectively. 

A main goal of the present paper is thus to arrive at inf-sup stability,
independent of the aspect ratio $\DL/\DW$. The breakdown of the inf-sup condition with the domain aspect ratio suggests that the $L^2$ norm for the pressure space is too strong
when paired with the $\HH^1$ velocity space in this setting. However, and perhaps counterintuitively, several works  
suggest that adding a  Laplacian preconditioner (coarse space or centerline-based) for the pressure 
\cite{dobrowolski2012domain, liu2021central,Meier2022} improves the conditioning.
See also \cite{sogn2023stable} where they construct a Dual-Primal Isogeometric Tearing and Interconnecting (IETI-DP) solver that does not suffer from the aspect ratio.
We show here that to achieve a robust inf-sup and a stable preconditioner, it is crucial, when pairing a velocity $\HH^1$ norm with the pressure norm, to weaken the (standard) $L^2$ norm. To this end, a suitable combination of the $L^2$ and $H^1$ norms is used.
Our analysis here is closely related to \cite{dobrowolski2012domain}, but we extend the analysis also to the continuous case and prove that in slender geometries, an appropriate space 
for the pressure is 
of the form $L^2+ \DW H^1$ (ignoring the boundary conditions for now)
and that its corresponding norm is indeed significantly weaker than $L^2$. 

The breakdown of the inf-sup condition occurs only for certain boundary conditions. It appears for the Dirichlet case, $\partial\Omega_D=\partial\Omega$,
where the pressure is only determined up to a constant, and hence, 
the inf-sup condition is stated in $\HH^1_0 \times L^2_0$. 
It also appears in the (perhaps more physically relevant) case of pipe flow: no-slip conditions (homogeneous Dirichlet conditions)
on the long boundaries (e.g., on the walls of the pipes) and stress/pressure (Neumann conditions) at the short boundaries (pipe inlet and outlet) \cite{braack2025infsupconditionstokesoutflow};
see also \Cref{ex:noslip,ex:freeslip} below.
In other words, the instability is due to Dirichlet conditions on the long edges (characteristic length scale $\DL$), which are separated by a small gap (characteristic
length scale $\DW$). See \cite[Section~3]{duran2011elementary} for a simple explanation in the Dirichlet case. On the other hand, the inf-sup constant appears independent of the aspect ratio for pure Neumann boundary conditions, and so this case will not be considered here. Specifically, we always assume that $\partial\Omega_D\neq\emptyset$. Moreover, to stay within the regime of a large aspect ratio, we also assume that the distance to a Dirichlet boundary from any point in the domain is at most some absolute constant times $\DW$.


Our analysis of the Stokes system will yield a preconditioner of the form
  \begin{equation}\label{eq:precond_espen}
 \mathcal{B} = 
 \left(\begin{array}{cc}
      -\Delta^{-1} &  \\
       & I^{-1} + (-\alpha\DW^2\Delta)^{-1} 
 \end{array} \right)
 \end{equation}
or alternatively  
 \begin{equation}\label{eq:precond_manfred}
 \mathcal{B}_H = 
 \left(\begin{array}{cc}
      -\Delta^{-1} &  \\
       & I^{-1} + (-\alpha\DW^2\Delta_H)^{-1} 
 \end{array} \right),
 \end{equation}
 where $-\Delta_H$ is a Laplacian in a coarser space~\cite{dobrowolski2012domain} or  
$1d$ centerline \cite{liu2021central} and $\alpha$ is a constant that can be set equal to $1$. However, by considering a simplified cross-section of the domain, we motivate in \Cref{sec:coarse} why $\alpha\approx \tfrac{1}{12}$ provides a better choice for $2d$ channel domains. The effect of the constant $\alpha$  is further investigated numerically in \Cref{sec:numerics}.
We also stress that the coarse space length $H\approx W$ is \emph{not} related to any fine scale discretization parameter but rather characterizes a coarse partition of the domain where each element's diameter is close to the domain width $\DW$.
In \Cref{sec:theory}, we show that the above preconditioners are robust with respect to the domain aspect ratio.

An outline of the paper is as follows: \Cref{sec:prelim} and \Cref{sec:fa} introduce the
necessary notation and functional setting. We prove a domain robust inf-sup condition in \Cref{sec:theory}.
In \Cref{sec:coarse} we link the preconditioners to a dimensionally reduced model
of Stokes flow. Finally, numerical experiments demonstrating the robustness of the preconditioners are presented in \Cref{sec:numerics}.

\section{Preliminaries and notation}\label{sec:prelim}

Let $\Omega$ be a slender domain characterized by its length $\DL$ and width $\DW$, where $\DW \ll \DL$.
The space $L^2(\Omega)$ consists of square integrable functions, while $L^2_0(\Omega)$ is the subspace consisting of functions with mean value zero. 
Further, $H^1(\Omega)$ denotes the Sobolev space with first-order derivatives in $L^2(\Omega)$.
The domain boundary is denoted by $\partial \Omega = \partial \Omega_D \cup \partial \Omega_N$, where $\partial \Omega_D$ 
and $\partial \Omega_N$ are the parts of the boundary associated with Dirichlet and Neumann conditions, 
respectively. Let $H^1_{S,g}$ be the subspace of $H^1(\Omega)$ with trace equal to $g$ on $\partial \Omega_S$, while $H^1_0(\Omega)$ has zero trace on the complete boundary.
We let $(\cdot, \cdot)_\Omega$ denote the $L^2$ inner product and the corresponding duality product.
When it is clear from the presentation, we omit $\Omega$ from the above symbols.
We let $\|\cdot\|_0$ denote the $L^2$ norm, while 
$|\cdot|_1$ denotes the $H^1$ semi-norm involving only the first-order derivatives. 
Under the boundary conditions in this paper ($\Omega_D \neq \emptyset$),
the full $H^1$ norm and the semi-norm are equivalent, but this equivalence depends on the Poincaré constant for $\Omega$, which again depends on the diameter of the domain. Using the semi-norm thus allows for a more precise analysis. 
Boldface is used for vector-valued functions and spaces. The letter $C$ is reserved for generic constants, independent of the length $\DL$, the width $\DW$, and the aspect ratio $\DL/\DW$, that may change their value from occurrence to occurrence.

If $X$ and $Y$ are Hilbert spaces with dual spaces $X'$ and $Y'$, then $X \cap Y$ and $X+Y$ denote the intersection 
and sum spaces, respectively. 
Both $X\cap Y$ and $X+Y$ are Hilbert spaces. 
The {\itshape intersection norm} is defined as 
\begin{equation*}
 \|u\|^2_{X \cap Y} = \|u\|^2_X + \|u\|^2_Y ,
\end{equation*}
and, assuming $X\subset Y$, the {\itshape sum norm} is equal to 
\begin{equation*}
 \|u\|^2_{X + Y} = \inf_{u_1 \in X} \left\lbrace \|u_1\|^2_X + \|u - u_1\|^2_Y \right\rbrace. 
\end{equation*}

\begin{remark}
 Note that $(X\cap Y)' = X' + Y'$ and further
 that if $A:X\rightarrow X'$ and $B: Y\rightarrow Y'$ are both coercive  isomorphisms 
 then $A+B : X \cap Y \rightarrow X' + Y'$ 
 and $(A^{-1} + B^{-1})^{-1}: X + Y \rightarrow X' \cap Y'$. 
For a more detailed analysis of intersection and sum spaces, consider~\cite{bergh2012interpolation}.
We point out that~\cite{bergh2012interpolation} uses a slightly different definition of
 the intersection and sum space to the one above, which is more practical 
 for implementation, see~\cite{baerland2020observation} for an elaboration.
\end{remark}

A main motivation of the present paper is to construct efficient preconditioners
for Stokes flow in slender domains. We follow the theory of operator preconditioning
for saddle-point problems~\cite{mardal2011preconditioning}. That is, 
if 
\begin{equation*}
\mathcal A : V \times Q \rightarrow V' \times Q'
\end{equation*}%
corresponds to a  saddle-point problem of the form
\begin{equation*}
\mathcal{A}\left(\begin{array}{cc}
      u \\
      p
 \end{array} \right)
 = 
 \left(\begin{array}{cc}
      A & B' \\
      B &
 \end{array} \right)
  \left(\begin{array}{cc}
      u \\
      p
 \end{array} \right)
 = 
  \left(\begin{array}{cc}
      f \\
      g
 \end{array} \right),
\end{equation*}
with $A$ and $B$ satisfying the Brezzi conditions in $V$ and $Q$, we construct block preconditioners
\begin{equation*}
\mathcal B : V' \times Q' \to V \times Q
\end{equation*}
of the form 
\begin{equation*}
 \mathcal{B} = 
 \left(\begin{array}{cc}
      N &  \\
       & M 
 \end{array} \right),
\end{equation*}
 with $N: V' \to V$ and $M: Q' \to Q$
 such that $\mathcal{B}$ is spectrally equivalent to $\mathcal{A}^{-1}$.
(In practice, both $N$ and $M$ are realized by order-optimal multi-level preconditioners
that are spectrally equivalent to Riesz operators in the discrete setting.)

In this work, we will encounter the particular case 
\begin{equation*}
 \mathcal{A}: V \times (Q_1 + Q_2)  \to  V' \times (Q_1' \cap Q_2')
\end{equation*}
 and 
\begin{equation*}
 \mathcal{B}: V' \times (Q'_1 \cap Q'_2)  \to  V \times (Q_1 + Q_2) .
\end{equation*}
Since the dual space of the pressure space, $Q'_1 \cap Q'_2$, is  an intersection space, we can directly infer the block preconditioner
\begin{equation*}
 \mathcal{B} = 
 \left(\begin{array}{cc}
      N &  \\
       & M_1 + M_2 
 \end{array} \right),
\end{equation*}
where $N: V' \to V$, $M_1: Q_1' \to Q_1$, and 
$M_2: Q_2' \to Q_2$ are isomorphisms corresponding to the inverse of standard elliptic operators. 

\section{Weak form and functional setting}\label{sec:fa}
 The \emph{standard weak formulation} of the Stokes problem (using a lifting operator to ensure
 homogeneous Dirichlet and Neumann conditions, which simplifies the analysis) is: 
 Find $\uu \in \HH^1_{D, 0}$ and $p \in L^2$ such that 
 \begin{equation}
 \begin{aligned}
 (\nabla \uu, \nabla \vv) + (p, \nabla \cdot \vv) &= (\ffone, \vv), && \forall \vv \in  \HH^1_{D, 0},  \\
 (\nabla \cdot \uu, q)    &= 0, && \forall q \in L^2 . 
 \end{aligned}
 \label{mixed:stokes}
\end{equation}

\begin{remark}
If $\partial \Omega_D = \partial \Omega$, then $p \in L^2_0$ rather than $L^2$. We analyze both cases subsequently. 
 The lifting employed consists in letting $\uu = \hat \uu + \bar \uu$
 and $p = \hat p + \bar p$,
 where 
 $\hat \uu = \fftwo \text{ on } \partial \Omega_D$,
 $\frac{\partial \uu}{\partial \nn} = 0 \text{ on } \partial \Omega_N$,
 $\hat p = \ffthree \cdot \nn \text{ on } \partial \Omega_N$,
 and
 $\frac{\partial \uu}{\partial \nn} = \ffthree - \hat p \nn \text{ on } \partial \Omega_N$ and then solve for $\bar \uu$ and $\bar p$ with an alternative $\ffone$
 derived from subtracting $\hat \uu$ and $\hat p$ from \eqref{eq:stokes1}--\eqref{eq:stokes4}. Below $\uu$ and $p$ denotes the corresponding $\bar \uu$
 and $\bar p$. 
 \end{remark}

The breakdown of the inf-sup condition for the Stokes problem in slender domains is commonly studied under
no-slip boundary conditions \cite{dobrowolski2003lbb, dobrowolski2005lbb, dobrowolski2012domain, duran2011elementary}. Here, we illustrate that the issue also pertains to other boundary conditions.

\begin{example}[No-slip and traction boundaries]\label{ex:noslip}%
  We consider the Stokes problem \eqref{eq:stokes} with mixed boundary conditions on domain $\Omega = (0,\DL) \times (0,1)$ and two configurations
  of boundary conditions: (i) where we let $\partial\Omega_D$ be the top boundary and (ii) where $\partial\Omega_D$ are top and bottom edges.
  The remaining boundaries form $\partial\Omega_N$ where traction is prescribed. Recalling \Cref{sec:prelim}, for both cases the operator and
  the standard Stokes preconditioner \cite{elman2014finite} read
  \begin{equation}\label{eq:B_standard}
 \mathcal{A} = 
 \left(\begin{array}{cc}
      -\Delta & -\nabla \\
      \nabla\cdot & 
 \end{array} \right),
\quad
 \mathcal{B} = 
 \left(\begin{array}{cc}
      -\Delta^{-1} &  \\
                  & I^{-1}
 \end{array} \right).
 \end{equation}
  We shall discretize the problem with a discrete inf-sup stable finite elements, the lowest order Taylor-Hood
  $[\mathbb{P}_2]^2\times\mathbb{P}_1$ and non-conforming $[\mathbb{CR}_1]^2\times\mathbb{P}_0$ pair using Crouzeix-Raviart element and facet stabilization~\cite{burman2005stabilized} for the velocity\footnote{
  In the following $\mathbb{P}_k$, $k\geq1$ denotes the space of continuous piecewise polynomials of degree $k$ with $\mathbb{P}_0$ the space
  of element-wise constant functions. For verification of convergence of various Stokes discretizations employed in the manuscript, we refer to \Cref{sec:cvrg}.
  }. 
(In the latter case, the discretization of $-\Delta$ requires stabilization \cite{burman2005stabilized}.)

  For a given $L$, we consider a series of uniformly refined
  meshes with mesh size on the given level $l$ such that $h=h_l=h_02^{-l}$.
  A sample mesh for level $l=0$ is shown in \Cref{fig:coarse_mesh}.
  We assess performance of preconditioner \eqref{eq:B_standard} in terms of condition number\footnote{
  The condition number is estimated (here and in the rest of the manuscript if not noted otherwise) by iteratively computing the largest and smallest magnitude eigenvalues of the operator. To this end,
  we use the Krylov-Schur solvers from SLEPc \cite{hernandez2005slepc} and Spectra \cite{Spectra_v101}.
  } of the discrete preconditioned operator $\mathcal{B}_h\mathcal{A}_h$
  as well as the number of MinRes iterations required to solve\footnote{
  In all the examples below, the source terms and boundary conditions for the Stokes problem are manufactured based on
  the solution given in \eqref{eq:mms} unless stated otherwise.
  } \eqref{eq:stokes} with relative tolerance of $10^{-12}$ for the preconditioned
  residual norm. Here, the preconditioner is realized by LU decomposition.

  In \Cref{tab:B_standard_TTD} we observe that using \eqref{eq:B_standard} in case (ii) the condition number scales as $\DL^2$
  for both discretizations considered. The MinRes iterations increase with $\DL$ as well; however, the growth is less dramatic. On the
  other hand, when prescribing traction on a long boundary \eqref{eq:B_standard} is robust with respect to $L$.

  As stated earlier, the blow-up of the condition number in (ii) is caused by a breakdown of the inf-sup constant as $\DL$ increases.
  In \Cref{fig:standard_infsup_mode} we therefore plot the eigenmodes corresponding to the few smallest (in magnitude) eigenvalues of $\mathcal{B}_h\mathcal{A}_h$.
  Notably, the pressure component of the modes is smooth and varies mostly in the elongated direction. The flow appears compartmentalized. (If $x=(\uu, p)$ denotes the eigenvector of $\mathcal{A}x=\lambda \mathcal{B}^{-1}x$ for some $\lambda\neq 0$, it holds that $\nabla\cdot\uu=\lambda p$,
  i.e. $\uu$ is not divergence free.) 
\begin{table}
  \centering
\caption{Performance of standard Stokes preconditioner \eqref{eq:B_standard}
  for problem \eqref{eq:stokes} on domain $(0, L)\times (0, 1)$ with mixed boundary conditions, such that (long no-slip) velocity is prescribed
  on top and bottom edges, or (long no-slip and long traction) velocity is set on the top edge, and traction is set on the bottom edge.
  Condition number of the preconditioned problem together with MinRes iterations (in parentheses)
  is shown for different levels of mesh refinement $l$.
  }
\label{tab:B_standard_TTD}
\footnotesize
\setlength{\tabcolsep}{3pt}    {
  \begin{tabular}{c|llll||llll}
    \hline
    & \multicolumn{4}{c||}{$[\mathbb{P}_2]^2\times\mathbb{P}_1$} & \multicolumn{4}{c}{$[\mathbb{CR}_1]^2\times\mathbb{P}_0$}\\
    \hline
    \diagbox{$L$}{$l$} & 1 & 2 & 3 & 4 & 1 & 2 & 3 & 4\\
    \hline
    & \multicolumn{8}{c}{Long no-slip and long traction}\\
    \hline
1 & 5.83(45) & 5.86(46) & 5.88(46) & 5.92(46)    & 4.66(38) & 4.63(36) & 5.07(42) & 5.24(46)\\ 
3 & 5.69(45) & 5.79(46) & 5.83(46) & 5.85(46)    & 4.65(38) & 5.03(43) & 5.18(45) & 5.24(46)\\ 
5 & 5.69(46) & 5.79(47) & 5.83(46) & 5.85(46)    & 4.76(40) & 5.07(43) & 5.19(44) & 5.24(45)\\ 
10 & 5.72(46) & 5.79(46) & 5.83(46) & 5.85(47)   & 4.81(42) & 5.08(43) & 5.2(44) & 5.24(44)\\  
20 & 5.7(46) & 5.79(46) & 5.83(46) & 5.84(46)    & 4.82(42) & 5.09(43) & 5.2(44) & 5.24(44)\\  
50 & 5.73(46) & 5.79(45) & 5.83(46) & 5.85(46)   & 4.82(41) & 5.09(42) & 5.2(43) & 5.24(44)\\  
\hline
    & \multicolumn{8}{c}{Long no-slip}\\
    \hline
1 & 6.38(44) & 6.38(44) & 6.39(44) & 6.39(44)     & 6.3(39) & 6.3(40) & 6.39(42) & 6.42(44)\\
3 & 23.8(48) & 23.84(50) & 24.11(50) & 24.21(52)  & 23.51(46) & 23.55(47) & 24.14(52) & 24.32(54)\\
5 & 56.79(52) & 58.78(58) & 59.47(60) & 59.69(61) & 53.59(48) & 58.24(60) & 59.56(62) & 59.98(64)\\
10 & 214(74) & 222(78) & 224(82) & 225(86)        & 204(70) & 220(84) & 225(88) & 226(89)\\        
20 & 846(114) & 876(120) & 886(126) & 890(133)    & 808(114) & 869(128) & 888(134) & 894(136)\\    
50 & 5270(229) & 5455(240) & 5522(252) & 5540(256)& 5035(237) & 5418(256) & 5534(266) & 5567(270)\\
    \hline
  \end{tabular}}
\end{table}

\begin{figure}
  \centering
  \includegraphics[width=1.0\textwidth]{./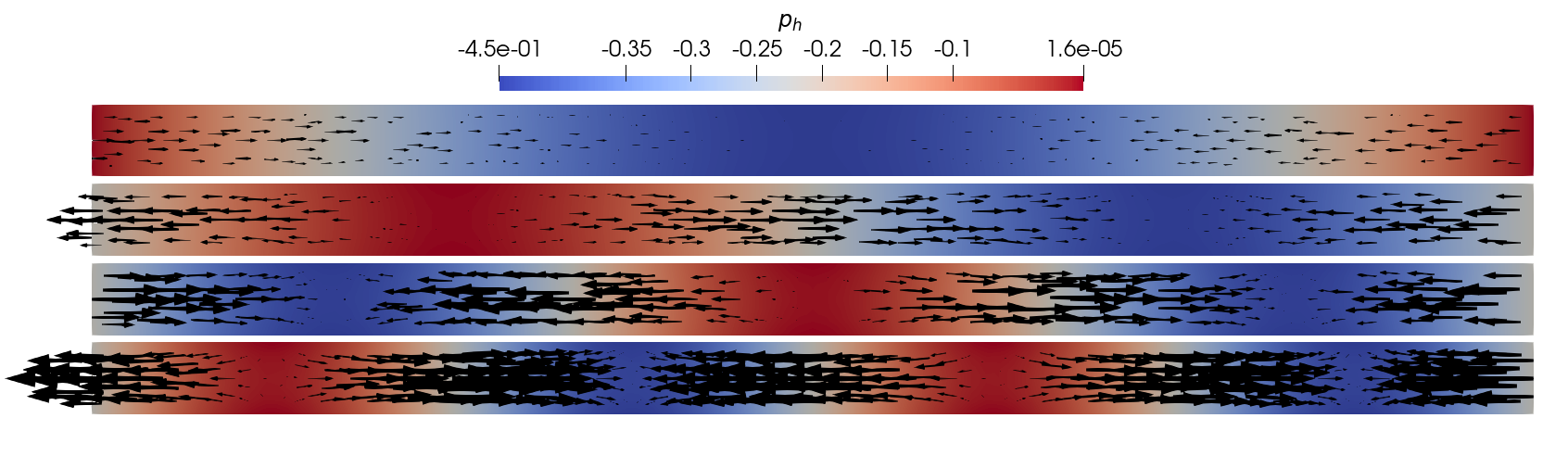}
  \vspace{-20pt}
  \caption{
    Eigenvalue problem for the Stokes problem on channel $L=20$ and mixed boundary conditions, where velocity is set on long boundaries, 
    using preconditioner \eqref{eq:B_standard}. Shown are the velocity (as glymphs) and pressure components
    of the eigenmodes corresponding to the first four smallest in magnitude eigenvalues; $\lvert \lambda_0 \rvert \approx 2\cdot10^{-3}$, $\lvert \lambda_1 \rvert \approx 8\cdot10^{-3}$,
    $\lvert \lambda_2 \rvert \approx 18\cdot10^{-2}$, $\lvert \lambda_3 \rvert \approx 31\cdot10^{-2}$.
    For the first mode, the inf-sup constant is attained.
  }
  \label{fig:standard_infsup_mode}
\end{figure}%
\end{example}

\begin{example}[Free-slip boundary]\label{ex:freeslip} 
  We consider the Stokes problem \eqref{eq:stokes} in $\Omega=(0, \DL)\times(0, 1)$ with an additional free-slip boundary
  $\partial\Omega_F=\partial\Omega\setminus(\partial\Omega_D\cup\partial\Omega_N)$ where we set
  \begin{equation}\label{eq:freeslip}
    \uu\cdot\nn = 0, \quad \nn^{\perp}\cdot\nabla\uu\cdot\nn=0\text{ on }\partial\Omega_F.
  \end{equation}

  Here $\nn^{\perp}$ is a unit tangent vector to $\partial\Omega_F$.
  This boundary condition is motivated by coupled multiphysics problems in which Stokes flow interacts across an interface with another medium, e.g., the air-water interface.
  In particular, the boundary conditions on $\partial\Omega_F$ reflect the coupling conditions on the interface; therein, the conservation of mass constrains
  only the normal component of the velocity and the condition $\nn^{\perp}\cdot\nabla\uu\cdot\nn=0$ allows the viscous
  fluid at the interface to slip freely in the tangential direction, cf. the Beavers-Joseph-Saffmann (BJS) condition \cite{saffman1971boundary},
 $\nn^{\perp}\cdot\nabla\uu\cdot\nn+\gamma \nn^{\perp}\cdot \uu = 0$,  where the fluid experiences friction.

  With $\Omega=(0, \DL)\times(0, 1)$, we prescribe traction on the left and right edges, and let (i)
  $\partial\Omega_F$, $\partial\Omega_D$ be respectively the top and bottom edges, or (ii) $\partial\Omega_D=\emptyset$. We remark
  that the case (ii) is well-posed in $\HH^1\setminus\textbf{RM}\times L^2$ where $\textbf{RM}=\left\{(1, 0)\right\}$ and the preconditioner component $-\Delta^{-1}$
  is then interpreted as pseudoinverse with respect to $\textbf{RM}$.
  Using preconditioner \eqref{eq:B_standard}, we see in \Cref{tab:B_standard_TTFD} that the condition
  number grows as $\DL^2$ in case (i). In contrast, when only the free-slip condition is applied on the long boundaries, the condition
  number shows only very small growth with $\DL$; $7.66$ for $\DL=1$ and $9.31$ for $\DL=50$.

  To summarize, \eqref{eq:B_standard} does not yield $\DL$-robust preconditioning
  for Stokes flow with mixed boundary conditions where no-slip and free-slip (or BJS condition) are prescribed on the opposing long boundaries.
  This is in contrast to the configuration with no-slip and traction boundary conditions in \Cref{ex:noslip}.
  Recalling the multiphysics setting that served as motivation for the current example, the sensitivity to domain length implies that the performance of
  recently proposed block-diagonal (material) parameter-robust preconditioners for coupled Stokes-Biot \cite{boon2022parameter} and
  Stokes-Darcy \cite{boon2022robust} problems may deteriorate on
  slender domains.

\begin{table}
  \centering
\caption{Performance of standard Stokes preconditioner \eqref{eq:B_standard}
  for the problem on the domain $(0, L)\times (0, 1)$ with mixed boundary conditions, where traction is set on the short left and right edges and
  free-slip and/or no-slip condition \eqref{eq:freeslip} is prescribed on long boundaries. Condition number of the preconditioned problem
  together with MinRes iterations (in parentheses)
  is shown for different levels of mesh refinement $l$. Discretization by Taylor-Hood $[\mathbb{P}_2]^2\times\mathbb{P}_1$ elements. Solver
  setup is identical to \Cref{ex:noslip}.
  }
\label{tab:B_standard_TTFD}
\footnotesize
\setlength{\tabcolsep}{3pt}    {
  \begin{tabular}{c|llll||llll}
    \hline
    & \multicolumn{4}{c||}{No-slip and free-slip} & \multicolumn{4}{c}{Free-slip only}\\
    \hline
    \diagbox{$L$}{$l$} & 1 & 2 & 3 & 4 & 1 & 2 & 3 & 4\\
    \hline
1 & 5.56(42) & 5.57(42) & 5.6(42) & 5.59(40)        & 7.66(35) & 7.66(35) & 7.66(35) & 7.66(35)\\ 
3 & 9.13(44) & 9.16(44) & 9.16(45) & 9.16(44)       & 7.55(35) & 7.56(35) & 7.56(33) & 7.56(35)\\ 
5 & 18.02(48) & 18.09(50) & 18.1(48) & 18.1(48)     & 7.64(35) & 7.64(35) & 7.65(35) & 7.65(31)\\ 
10 & 59.48(58) & 59.69(61) & 59.73(61) & 59.74(60)  & 7.85(35) & 7.85(35) & 7.85(35) & 7.85(35)\\ 
20 & 225(76) & 225(83) & 225(84) & 225(84)          & 8.25(35) & 8.25(35) & 8.25(35) & 8.25(35)\\ 
50 & 1383(137) & 1388(142) & 1389(150) & 1389(150)  & 9.30(35) & 9.31(35) & 9.31(35) & 9.31(34)\\  
\hline
  \end{tabular}}
\end{table}
  
\end{example}
 
As demonstrated in \Cref{ex:noslip,ex:freeslip}, the pressure norm in the formulation \eqref{mixed:stokes} is too strong and
causes a breakdown of the inf-sup condition in slender domains. We therefore propose a formulation based on a weaker pressure norm.   
The \emph{modified weak formulation} considered in this work is: 
 Find $\uu \in \HH^1_{D,0}$ and $p \in L^2+ \DW H^1_{N,0}$ such that 
\begin{equation}\label{eq:stokes_sumspace}
 \begin{aligned}
 (\nabla \uu, \nabla \vv) + (p, \nabla \cdot \vv) &= (\ff, \vv), && \forall \vv \in  \HH^1_{D,0} \\
 (\nabla \cdot \uu, q)    &= 0, && \forall q \in L^2+ \DW H^1_{N,0} . 
 \end{aligned}
\end{equation}
If $\partial \Omega = \partial \Omega_D$, then 
 $p, q \in L^2_0 + \DW H^1 \cap L^2_0 $ . 
 The first two of the four Brezzi conditions (see, e.g., \cite[Chapter~12.2]{Brenner2007-cg}) are the same as before. That is, both the coercivity and boundedness of 
$(\nabla \uu, \nabla \vv)$ are obvious in  $H^1_{D,0}$ as long as $\partial \Omega_D \not = \emptyset$ (as always assumed throughout this paper). 

The main purpose of this paper is to show that there is no dependence on the aspect ratio in the two remaining Brezzi conditions when replacing the $L^2$ norm by the weaker $L^2+ \DW H^1_{N,0}$ norm.
Specifically, for $q\in L^2(\Omega)$ let $\|\cdot\|_{*,\Omega}$ denote the inf-sup norm given by 
\begin{align*}
\|q\|_{*}:=\sup_{\uu\in \HH^1_{0, D}(\Omega)} \frac{(\nabla\cdot \uu, q)}{|\uu|_{\HH^1(\Omega)}}.
\end{align*}
(As before, when it is clear from the presentation, we will omit $\Omega$ from the above notation.
Moreover, we recall that in the case of pure Dirichlet boundary conditions, we have $q\in L^2_0(\Omega)$ and in this case it is common to use the notation $\|q\|_{*}=\|\grad q\|_{\HH^{-1}(\Omega)}$.)
The classical Brezzi conditions can be stated as
\begin{align}\label{ineq:classical-Brezzi-conditions}
  \|q\|_{*}\leq \|q\|_{L^2} \leq C(\Omega)\|q\|_{*},  
\end{align}
where we recall that the constant $C(\Omega)$ depends on the aspect ratio.
We will show that the norm $\|q\|_{*}$ is equivalent to $\|q\|_{L^2+ \DW H^1_{N,0}}$ in the case of mixed boundary conditions, and equivalent to $\|q\|_{L^2_0+ \DW H^1 \cap L^2_0}$ in the case of pure Dirichlet conditions, with equivalence constants independent of the aspect ratio of $\Omega$. 

The norm on the space $L^2+\DW H^1\cap L^2_0$ can be defined as
\begin{align}\label{eq:sum-norm}
    \|q\|_{L^2+\DW H^1} = \inf_{\tilde{q}\in H^1(\Omega)}\sqrt{\|q-\tilde{q}\|^2_{L^2(\Omega)}+\DW^2|\tilde{q}|^2_{H^1(\Omega)}}.
\end{align}
In interpolation theory, a norm of this type is referred to as a $K$-functional; see \cite[Chapter~3.1]{bergh2012interpolation} for more details about its properties.
Let us now look at the equivalence constants between the sum-norm in \eqref{eq:sum-norm} and the $L^2$ norm on $L^2_0(\Omega)$. Since the sum-norm is weaker than the $L^2$ norm, it is straightforward to see that 
\begin{align*}
    \|q\|_{L^2+\DW H^1} = \inf_{\tilde{q}\in H^1(\Omega)}\sqrt{\|q-\tilde{q}\|^2_{L^2(\Omega)}+\DW^2|\tilde{q}|^2_{H^1(\Omega)}}\leq \|q\|_{L^2}.
\end{align*}
For the lower bound, we proceed as follows.
For any $\tilde{q}\in L^2_0(\Omega)\cap H^1(\Omega)$ we have
\begin{equation}
   \begin{aligned}\label{ineq:lower-bound}
    \|q\|_{L^2} &\leq \|q-\tilde{q}\|_{L^2} + \|\tilde{q}\|_{L^2}
    \\
    &\leq \|q-\tilde{q}\|_{L^2} + \frac{C_p(\Omega)}{\DW}|\tilde{q}|_{H^1(\Omega)}
    \\
    &\leq \max\left\lbrace 1, \frac{C_p(\Omega)}{\DW}\right\rbrace\left(\|q-\tilde{q}\|_{L^2} +\DW|\tilde{q}|_{H^1(\Omega)}\right)
    \\
    &\leq \sqrt{2}\max\left\lbrace 1, \frac{C_p(\Omega)}{\DW}\right\rbrace\sqrt{\|q-\tilde{q}\|^2_{L^2(\Omega)}+\DW^2|\tilde{q}|^2_{H^1(\Omega)}}.
\end{aligned} 
\end{equation}
Since the above holds for any $\tilde{q}\in H^1$, it also holds for the infimum. Summarising, we have
\begin{align}\label{ineq:equivalence-L2-sum}
    \|q\|_{L^2+\DW H^1}\leq \|q\|_{L^2} \leq \sqrt{2}\max\left\lbrace 1, \frac{C_p(\Omega)}{\DW}\right\rbrace\|q\|_{L^2+\DW H^1}.
\end{align}
It is well-known that at least for convex domains the above Poincaré constant $C_p(\Omega)$ behaves as the longest diameter of the domain \cite{Payne:60}, i.e., $\DL$. In fact, for a rectangle, one can check that $C_p(\Omega)=\frac{\DL}{\pi}$. Note that the behavior of the Poincaré constant is, in general, highly dependent on the boundary conditions and the shape of the domain. 
 Since the equivalence constants in \eqref{ineq:equivalence-L2-sum} behave in the same way as those between the $L^2$ norm and the $\|\cdot\|_*$ norm in \eqref{ineq:classical-Brezzi-conditions},
then that is a good reason to expect that $\|\cdot\|_*$ is spectrally equivalent to $\|\cdot\|_{L^2+\DW H^1}$, which is something we prove in the next section.
To provide a more intuitive understanding of the above sum-space norm, we end this section with an illustrative example.

\begin{example}[The univariate case]
    Consider a long interval $(0,L)$. 
    In this case, a simple calculation shows that
    \begin{align*}
        \|q'\|_{H^{-1}(0,L)}=\|q\|_{L^2(0,L)}, \quad \forall q\in L^2_0(0,L).
    \end{align*}
    Since these norms are equal, we see that the scaling of the inf-sup constant is inherently a higher-dimensional problem, as one might already expect from it being bounded by the aspect ratio, and not the size of the domain. However, the behavior of the $L^2$ norm and the $H^1$ semi-norm on long intervals can still provide some insight into the norms studied in this paper. As there is no width of a one-dimensional interval, $\DW$ will in this example play the role of a free parameter (specifically the parameter of the $K$-functional) satisfying $\DW \ll \DL$. For any $q\in L^2_0(0,L)\cap H^1(0,L)$ we consider the standard cosine-expansion
    \begin{align*}
        q(x) = \sum_{k=1}^\infty c_k\phi_k(x),
    \end{align*}
    where $\phi_k(x)=\sqrt{\frac{2}{L}}\cos\left(\frac{k\pi x}{L}\right)$ are orthonormal and span both $L^2_0(0,L)$ and $L^2_0(0,L)\cap H^1(0,L)$. By construction $\|\phi_k\|_{L^2(0,L)}=1$, and a simple calculation shows that $$\DW|\phi_k|_{H^1(0,L)}=\DW\|\phi_k'\|_{L^2(0,L)}=\frac{k\pi\DW}{L}.$$
    Thus for low frequencies, i.e., for $k$ satisfying $k\leq \frac{\DL}{\pi\DW}$, the weighted $H^1$ semi-norm is considerably weaker than the $L^2$ norm. In fact, if we let $n=\lfloor\frac{L}{\pi\DW}\rfloor$ and define $P_n$ to be the orthogonal projection onto $\operatorname{span}\{\phi_1,\ldots,\phi_n\}$, then a new norm on $L^2_0(0,L)$ given by
    \begin{align*}
        \sqrt{\|q-P_nq\|^2_{L^2(0,L)}+\DW^2|P_nq|^2_{H^1(0,L)}}
    \end{align*}
    would be significantly weaker than the standard $L^2$ norm for low frequencies. Consequently, this also holds for the sum norm ($K$-functional) of $L^2_0(0,L)+\DW H^1(0,L)$ defined as
    \begin{align*}
        \|q\|_{L^2+\DW H^1} = \inf_{\tilde{q}\in H^1(0,L)}\sqrt{\|q-\tilde{q}\|^2_{L^2(0,L)}+\DW^2|\tilde{q}|^2_{H^1(0,L)}}.
    \end{align*}
\end{example}

\section{The robust inf-sup condition}\label{sec:theory}
In this section, we begin by extending some of the results in \cite{dobrowolski2012domain} to include the case $\partial\Omega_D\neq\partial\Omega$. It is further necessary to repeat many of the arguments therein to keep better track of the involved constants, which allows us to use those results in the proof of our main theorem. As already mentioned in the introduction, to stay within the regime of a large aspect ratio, we assume that the Dirichlet boundary $\partial\Omega_D$ is associated with the length scale $\DL$. Moreover, the distance from any point in the domain to $\partial\Omega_D$ is assumed to be at most some absolute constant times $\DW$. 

As a first step in our argument, we partition the domain $\Omega$ into non-overlapping subdomains $\Omega_j, j=1,2,\ldots,N$, such that $\Omega=\bigcup_{j=1}^N \Omega_j$. This partition serves as a coarse mesh, but we stress that, contrary to \cite{dobrowolski2012domain}, it is primarily used as a theoretical tool and it is not necessary to construct it in practice. However, a coarse mesh \textit{can} be constructed, and we do so in some of our numerical examples. We let $\Hc_j=\operatorname{diam}\Omega_j$, $\Hc=\max_j\Hc_j$ and $\Hc_\text{min}=\min_j\Hc_j$, and consider the following properties of this theoretical mesh.
\begin{enumerate}[label=(\roman*)]
    \item \textit{Shape regularity}: $\Hc_j^d\leq C_{\text{shape}} |\Omega_j|$, for every $j=1,\ldots,N$.
    \label{shape-reg}
    \item \textit{Mesh grading}: $\Hc_i\leq C_{\text{grade}} \Hc_j$, for every $i,j$ such that $\Omega_i\cap\Omega_j\neq\emptyset$.
    \label{grading}
    \label{quasi-unif}
    \item \textit{Coarseness}: $\DW \leq C_{\text{coarse}}\Hc_\text{min}$.
    \label{coarse}
\end{enumerate}
We note that shape regularity and mesh grading are typically assumed for a refinement procedure as the mesh size goes to zero, and where, crucially, the above constants are independent of the refinement level. Since we are only considering a fixed coarse partition, our mesh is trivially shape-regular and graded in this typical sense. However, the constants we obtain for the norm equivalence of the preconditioner will depend directly on those in \ref{shape-reg}, \ref{grading}, and \ref{coarse}, as well as on the connectivity of the partition. In other words, we consider domains $\Omega$ for which there exist partitions $\{\Omega_j\}_j$ such that the connectivity is ``small'' and the constants in \ref{shape-reg}, \ref{grading}, and \ref{coarse} are ``reasonably close'' to $1$.

Let $Q_H$ denote the space of piecewise constant functions with respect to the partition $\{\Omega_j\}$.
By slightly abusing notation, we will use the same symbol $Q_H$ to refer to the $L^2$ projection onto the space $Q_H$ as well, i.e.,
\begin{align}\label{def:Q}
(Q_H q)(x) = \frac{1}{|\Omega_i|}\int_{\Omega_i}q(y)dy,\quad x\in \Omega_i.
\end{align}
We refer to the set of interior faces as $\mathcal{F}_\text{int}$, and for a given face $F$, we use the notation $\Omega_-$ and $\Omega_+$ to denote the two subdomains meeting at $F$. 
Moreover, we will let $H_F=\operatorname{diam} F$, and use the shorthand $C(F)=C(\Omega_-\cup\Omega_+)$ for the inf-sup constant. The set of faces on the Neumann boundary will be denoted by $\mathcal{F}_N$.

The next result is an extension of \cite[Lemma~2.1]{dobrowolski2012domain}.
\begin{lemma}\label{lem:infsup-norm}
    If $\Omega_j, j=1,\ldots N$ are disjoint open subsets of $\Omega$, then
    \begin{align}
        \sum_{j=1}^N \|q\|^2_{*,\Omega_j} \leq \|q\|^2_{*,\Omega}, \quad \forall q\in Q,
    \end{align}
    where $Q=L^2_0(\Omega)$ if $\partial\Omega_D=\partial\Omega$ and $Q=L^2(\Omega)$ otherwise.
\end{lemma}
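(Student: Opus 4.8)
The plan is to work directly from the variational definition of the inf-sup norm and to exploit the disjointness of the subdomains through extension by zero. For each $j$, the local quantity $\|q\|_{*,\Omega_j}$ is a supremum of the Rayleigh quotient $(\grad\cdot\uu_j,q)_{\Omega_j}/|\uu_j|_{\HH^1(\Omega_j)}$ over $\uu_j\in\HH^1_0(\Omega_j)$. The crucial observation is that any such $\uu_j$, belonging to the fully homogeneous space $\HH^1_0(\Omega_j)$, extends by zero to a function $\tilde\uu_j\in\HH^1_0(\Omega)\subseteq\HH^1_{0,D}(\Omega)$, which is therefore an admissible test function for the global norm $\|q\|_{*,\Omega}$. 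This admissibility is exactly what makes the localization work and is the reason the local norms must be taken with respect to $\HH^1_0(\Omega_j)$ rather than only prescribing the Dirichlet portion of $\partial\Omega_j$.

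First I would fix $\epsilon>0$ and, for each $j$, select a near-optimal $\uu_j\in\HH^1_0(\Omega_j)$, normalized so that $|\uu_j|_{\HH^1(\Omega_j)}=1$ and $(\grad\cdot\uu_j,q)_{\Omega_j}\geq \|q\|_{*,\Omega_j}-\epsilon$ (if $\|q\|_{*,\Omega_j}=0$, simply take $\uu_j=0$). I would then form the global competitor $\uu=\sum_{j=1}^N c_j\tilde\uu_j$ for nonnegative coefficients $c_j$ to be chosen. Because the supports of the $\tilde\uu_j$ are pairwise disjoint, all cross terms vanish and one obtains the exact identities $|\uu|^2_{\HH^1(\Omega)}=\sum_j c_j^2$ and $(\grad\cdot\uu,q)_\Omega=\sum_j c_j(\grad\cdot\uu_j,q)_{\Omega_j}$. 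This orthogonality is the heart of the argument.

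Feeding these identities into the definition of $\|q\|_{*,\Omega}$ yields
\[
\|q\|_{*,\Omega}\geq \frac{\sum_j c_j(\grad\cdot\uu_j,q)_{\Omega_j}}{\sqrt{\sum_j c_j^2}}\geq \frac{\sum_j c_j\left(\|q\|_{*,\Omega_j}-\epsilon\right)}{\sqrt{\sum_j c_j^2}}.
\]
I would then optimize over the coefficients: choosing $c_j=\|q\|_{*,\Omega_j}$ saturates the Cauchy--Schwarz inequality and turns the right-hand side into $\sqrt{\sum_j \|q\|_{*,\Omega_j}^2}$ up to a term of order $\epsilon$. Letting $\epsilon\to 0$ and squaring gives the claimed inequality.

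The steps are all routine once the orthogonality is in place, so I do not expect a serious obstacle; the only points requiring care are the admissibility of the zero-extension just discussed and the degenerate case in which some local norms vanish, which is handled by setting the corresponding $c_j=0$. I would also remark that since $\uu_j\in\HH^1_0(\Omega_j)$ forces $\int_{\Omega_j}\grad\cdot\uu_j=0$, the local norm only senses the mean-zero part of $q$ on each $\Omega_j$; this is consistent with $Q$ being (a subspace of) $L^2$ and requires no separate treatment of the two cases $\partial\Omega_D=\partial\Omega$ and $\partial\Omega_D\neq\partial\Omega$.
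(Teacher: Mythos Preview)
Your argument is correct and rests on the same core mechanism as the paper's: zero-extend local test functions from the disjoint $\Omega_j$ into $\HH^1_{0,D}(\Omega)$ and exploit the resulting orthogonality in the $\HH^1$-seminorm. The bookkeeping differs. The paper realizes each $\|q\|_{*,\Omega_j}$ as $|\ww_j|_{\HH^1(\Omega_j)}$ for the weak solution $\ww_j$ of a local Poisson-type problem (so the supremum is attained), then uses that the extended $\ww_j$ are admissible for the global problem to write $\sum_j|\ww_j|^2=\sum_j(\nabla\ww,\nabla\ww_j)_{\Omega_j}$ and absorbs this via Young's inequality. You instead work with $\epsilon$-near-optimal test functions and optimize the coefficients $c_j$ by Cauchy--Schwarz, which is slightly more elementary in that no auxiliary PDE is invoked. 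One small point: you restrict the local test space to the full $\HH^1_0(\Omega_j)$, whereas the paper's local norm allows nonzero trace on $\partial\Omega_j\cap\partial\Omega_N$ (only internal faces and the original Dirichlet faces are set to zero). Since such functions still extend by zero across the internal faces into $\HH^1_{0,D}(\Omega)$, your argument goes through verbatim with the paper's larger local space, and the caveat you state is not actually needed.
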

\begin{proof}
    If $\partial\Omega_D=\partial\Omega$ then the result is proved in \cite[Lemma~2.1]{dobrowolski2012domain}. In the case of $\partial\Omega_D\neq\partial\Omega$, the argument has to be modified slightly in the following way. Let $\ww_j\in \HH^1_{D,0}(\Omega_j)$, where we take no-slip conditions on all internal faces in addition to the no-slip Dirichlet boundary faces, and $\ww\in \HH^1_{D,0}(\Omega)$ be solutions of the problems
    \begin{align*}
        (\grad \ww_j,\grad\phiphi)_{\Omega_j} &= (q,\nabla\cdot \phiphi)_{\Omega_j}, &&\forall \phiphi\in \HH^1_{D,0}(\Omega_j),
        \\
        (\grad \ww,\grad\phiphi) &= (q,\nabla\cdot \phiphi), &&\forall \phiphi\in \HH^1_{D,0}(\Omega).
    \end{align*}
    Observe that
    \begin{align}\label{eq:infsup-norm}
        \|q\|_{*,\Omega}=\sup_{\phiphi\in \HH^1_{0, D}(\Omega)} \frac{(q,\nabla\cdot \phiphi)}{|\phiphi|_{\HH^1(\Omega)}} 
        = \sup_{\phiphi\in \HH^1_{0, D}(\Omega)} \frac{(\grad \ww,\grad\phiphi)}{|\phiphi|_{\HH^1(\Omega)}} 
        = |\ww|_{\HH^1(\Omega)},
    \end{align}
    and similarly $\|q\|_{*,\Omega_j}=|\ww_j|_{\HH^1(\Omega_j)}$ for $j=1,\ldots,N$.
    Extending the functions $\ww_j$ by $\mathbf{0}$ from all internal faces, we obtain
    \begin{align*}
        \sum_{j=1}^N \|q\|^2_{*,\Omega_j} &= \sum_{j=1}^N |\ww_j|^2_{\HH^1(\Omega_j)} = \sum_{j=1}^N (q,\nabla\cdot \ww_j)_{\Omega_j} = \sum_{j=1}^N (\grad \ww,\grad \ww_j)_{\Omega_j}
        \\
        &\leq \frac{1}{2}|\ww|^2_{\HH^1(\Omega)} + \frac{1}{2}\sum_{j=1}^N |\ww_j|^2_{\HH^1(\Omega_j)} = \frac{1}{2}|\ww|^2_{\HH^1(\Omega)} + \frac{1}{2}\sum_{j=1}^N \|q\|^2_{*,\Omega_j}
    \end{align*}
    and the result follows from \eqref{eq:infsup-norm}.
\end{proof}

\begin{lemma}\label{lem:Qh}
Let $Q_H$ be defined as in \eqref{def:Q}. Then
\begin{align*}
\|q-Q_H q\|_{L^2(\Omega)}\leq \max_{i=1,\ldots,N}C(\Omega_i)\|q\|_{*,\Omega}.
\end{align*}
\end{lemma}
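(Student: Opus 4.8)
The plan is to reduce the global quantity $\|q-Q_H q\|_{L^2(\Omega)}$ to a sum of purely local estimates, one per subdomain, and then close the argument with the classical Brezzi inequality \eqref{ineq:classical-Brezzi-conditions} on each $\Omega_i$ together with \Cref{lem:infsup-norm}. First I would exploit the definition \eqref{def:Q} of $Q_H$: on each subdomain $(Q_H q)|_{\Omega_i}$ equals the mean value $c_i=\tfrac{1}{|\Omega_i|}\int_{\Omega_i}q$, so the restriction $(q-Q_H q)|_{\Omega_i}=q-c_i$ has zero mean, i.e. $q-c_i\in L^2_0(\Omega_i)$. Splitting the $L^2$ norm over the disjoint partition then gives $\|q-Q_H q\|^2_{L^2(\Omega)}=\sum_{i=1}^N\|q-c_i\|^2_{L^2(\Omega_i)}$, which is the starting point for the localization.

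Next I would bound each local term. Since $q-c_i$ has vanishing mean on $\Omega_i$, it lies in the pressure space for which the classical inf-sup (right-hand) bound in \eqref{ineq:classical-Brezzi-conditions} holds on $\Omega_i$ with constant $C(\Omega_i)$, giving $\|q-c_i\|_{L^2(\Omega_i)}\leq C(\Omega_i)\,\|q-c_i\|_{*,\Omega_i}$. The key simplification is that the additive constant $c_i$ is invisible to the inf-sup norm: for every no-slip test field $\uu$ on $\Omega_i$ one has $(\grad\cdot\uu,c_i)_{\Omega_i}=c_i\int_{\partial\Omega_i}\uu\cdot\nn=0$, whence $\|q-c_i\|_{*,\Omega_i}=\|q\|_{*,\Omega_i}$. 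Combining the two facts yields the clean local estimate $\|q-c_i\|_{L^2(\Omega_i)}\leq C(\Omega_i)\,\|q\|_{*,\Omega_i}$.

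Finally I would assemble the pieces. Bounding $C(\Omega_i)\leq\max_{j}C(\Omega_j)$, squaring, and summing over $i$ gives $\|q-Q_H q\|^2_{L^2(\Omega)}\leq\big(\max_{j}C(\Omega_j)\big)^2\sum_{i=1}^N\|q\|^2_{*,\Omega_i}$; the remaining sum is controlled by $\|q\|^2_{*,\Omega}$ via \Cref{lem:infsup-norm}, and taking square roots produces the asserted inequality.

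The step I expect to require the most care is the bookkeeping of which local inf-sup norm is used on subdomains that meet the original Neumann boundary. The constant-annihilation identity $\|q-c_i\|_{*,\Omega_i}=\|q\|_{*,\Omega_i}$, which is what lets the purely Dirichlet constant $C(\Omega_i)$ enter, is exact precisely when the local test fields are taken with full no-slip on all of $\partial\Omega_i$; this is also the setting in which the classical Brezzi bound of \eqref{ineq:classical-Brezzi-conditions} applies to the mean-zero function $q-c_i$. Since this full no-slip space is contained in the (possibly larger) test space used in \Cref{lem:infsup-norm}, the associated local norms only decrease, so the final application of \Cref{lem:infsup-norm} still dominates their sum by $\|q\|^2_{*,\Omega}$. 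Checking this norm domination carefully is the only genuinely delicate point; everything else is routine summation.
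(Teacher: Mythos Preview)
Your proposal is correct and is precisely the argument the paper has in mind: the paper's proof is the single sentence ``This follows from local inf-sup conditions together with \Cref{lem:infsup-norm},'' and your write-up is a faithful unpacking of that sentence. Your final paragraph correctly isolates the only genuine subtlety---that on subdomains touching $\partial\Omega_N$ one should use the full no-slip local inf-sup norm (so that $C(\Omega_i)$ and the constant-annihilation identity both apply), and that this norm is dominated by the local norm appearing in \Cref{lem:infsup-norm}---and resolves it cleanly.
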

\begin{proof}
This follows from local inf-sup conditions together with Lemma \ref{lem:infsup-norm}.
\end{proof}
As discussed before, we are interested in using an $H^1$ semi-norm in ``long directions'' of the domain $\Omega$. To do that, we first introduce the discrete Laplacian on a coarse space, which is in general given by 
\begin{equation}\label{eq:laplace}
  \begin{split}
    (-\Delta_H p, q) &= \sum_{i=1}^N \int_{\Omega_i} \nabla p\cdot \nabla q \mathrm{d}x+
\sum_{F\in\mathcal{F}_{\text{int}}} \int_{F} \frac{1}{H_F}\jump{p}_F\jump{q}_F\mathrm{d}s\\
&+ \sum_{F\in\mathcal{F}_N} \int_{F} \frac{1}{H_F}p q\mathrm{d}s.
\end{split}
\end{equation}
Here $\jump{\cdot}_F$ denotes the jump at the face $F$. We also note that for $p, q\in Q_H$ the first term is zero.
 The standard discrete $H^1$ semi-norm $|\cdot|_{1,H}$ on the space $Q_H$ is then equal to
\begin{equation}\label{eq:DeltaH}
  \begin{aligned}
   |q_H|_{1,H}^2 &= \langle (-\Delta_{H}) q_H, q_H\rangle 
   \\
   &=\sum_{F\in\mathcal{F}_\text{int}} \int_{F} \frac{\jump{q_H}_F^2}{H_{F}}\mathrm{d}s + \sum_{F\in\mathcal{F}_N} \int_{F} \frac{q_H^2}{H_{F}}\mathrm{d}s,\qquad\forall  q_H\in Q_H.
\end{aligned}  
\end{equation}
\begin{lemma}\label{lem:trace}
There is a constant $C>0$ such that for every $q_H\in Q_H$ we have
\begin{align*}
    \|q_H\|_{*,\Omega} \leq C\Hc|q_H|_{1,H}.
\end{align*}
\end{lemma}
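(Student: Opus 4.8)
The plan is to argue directly from the definition $\|q_H\|_{*,\Omega}=\sup_{\uu\in\HH^1_{D,0}}(\nabla\cdot\uu,q_H)/|\uu|_1$, exploiting that $q_H$ is piecewise constant. First I would integrate by parts element by element: since $q_H$ is constant on each $\Omega_j$, the divergence theorem gives $(\nabla\cdot\uu,q_H)_{\Omega_j}=q_H|_{\Omega_j}\int_{\partial\Omega_j}\uu\cdot\nn\,\mathrm{d}s$. Summing over $j$ collapses the two contributions on each interior face into a jump, and because $\uu$ vanishes on $\partial\Omega_D$ the Dirichlet boundary faces drop out entirely, leaving
\[
(\nabla\cdot\uu,q_H)=\sum_{F\in\mathcal{F}_\text{int}}\jump{q_H}_F\int_F\uu\cdot\nn\,\mathrm{d}s+\sum_{F\in\mathcal{F}_N}q_H\int_F\uu\cdot\nn\,\mathrm{d}s.
\]
This representation is tailored to the definition of $|q_H|_{1,H}$ in \eqref{eq:DeltaH}, and in fact explains why the Neumann-face term is included there.

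Next I would apply the Cauchy--Schwarz inequality over the set of faces, writing each interior summand as the product of $\jump{q_H}_F|F|^{1/2}\Hc_F^{-1/2}$ and $\Hc_F^{1/2}|F|^{-1/2}\int_F\uu\cdot\nn\,\mathrm{d}s$, and analogously on $\mathcal{F}_N$. The first factors assemble exactly into $|q_H|_{1,H}$, while for the second factors the elementary bound $(\int_F\uu\cdot\nn\,\mathrm{d}s)^2\le|F|\,\|\uu\|^2_{L^2(F)}$ yields $\tfrac{\Hc_F}{|F|}(\int_F\uu\cdot\nn\,\mathrm{d}s)^2\le \Hc_F\|\uu\|^2_{L^2(F)}$. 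Hence
\[
(\nabla\cdot\uu,q_H)\le |q_H|_{1,H}\Big(\textstyle\sum_{F}\Hc_F\|\uu\|^2_{L^2(F)}\Big)^{1/2}.
\]

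It then remains to control the velocity face-sum by $\Hc^2|\uu|_1^2$. For this I would invoke a scaled trace inequality on each shape-regular subdomain,
\[
\|\uu\|^2_{L^2(F)}\le\|\uu\|^2_{L^2(\partial\Omega_j)}\le C\big(\Hc_j^{-1}\|\uu\|^2_{L^2(\Omega_j)}+\Hc_j|\uu|^2_{H^1(\Omega_j)}\big),
\]
where $C$ depends only on the shape-regularity constant in \ref{shape-reg}. Using $\Hc_F\le \Hc_j\le\Hc$ and the bounded connectivity of the partition (each $\Omega_j$ is counted a fixed number of times), the face-sum is dominated by $C(\|\uu\|^2_{L^2(\Omega)}+\Hc^2|\uu|^2_1)$. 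Finally I would absorb the $L^2$ term through a Friedrichs/Poincar\'e inequality: since every point of $\Omega$ lies within distance $\le C\DW$ of $\partial\Omega_D$ and $\uu=\mathbf{0}$ there, one gets $\|\uu\|_{L^2(\Omega)}\le C\DW|\uu|_1$, and the coarseness assumption \ref{coarse} gives $\DW\le C_{\text{coarse}}\Hc_\text{min}\le C\Hc$. This collapses the face-sum to $C\Hc^2|\uu|_1^2$; dividing by $|\uu|_1$ and taking the supremum over $\uu\in\HH^1_{D,0}$ yields the claim.

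The main obstacle is the Poincar\'e/trace step: one must confirm that the slenderness hypothesis genuinely produces a Poincar\'e constant of order $\DW$ rather than $\DL$, since it is precisely this small constant that converts $\|\uu\|_{L^2}$ into the $\Hc$-weighted seminorm and renders the final constant independent of the aspect ratio. The remaining bookkeeping --- keeping the face/volume scalings consistent via \ref{shape-reg} (so that $|F|\sim \Hc_F^{d-1}$ and only $\Hc_F\le\Hc_j$ is needed), and bounding the overcounting by the connectivity of the partition --- is routine but essential, and is where the structural assumptions on the partition enter.
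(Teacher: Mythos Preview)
Your proposal is correct and follows essentially the same route as the paper: elementwise divergence theorem to reduce to face integrals (with Dirichlet faces dropping out), Cauchy--Schwarz over the faces, the scaled trace inequality on each shape-regular subdomain, and finally the Poincar\'e inequality with constant $\sim\DW$ together with the coarseness assumption \ref{coarse}. The only cosmetic difference is the placement of the $\Hc_F$ weight in the Cauchy--Schwarz step (you attach it to the velocity factor so that the pressure factor is directly $|q_H|_{1,H}$, whereas the paper attaches it to $q_H$ and obtains $|\Hc q_H|_{1,H}$), but this is immaterial.
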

\begin{proof}
Using the divergence theorem on each subdomain, we obtain for any $\vv\in\HH^1_{0, D}$
\begin{align*}
    &(\nabla\cdot \vv, q_H) =\sum_{i=0}^N\int_{\Omega_i}\nabla\cdot \vv \, q_H \text{d}x
    \\
    &\leq \sum_{F\in\mathcal{F}_\text{int}}\int_{F}|\vv\cdot \nn \, [q_H]_{F}|\text{d}s +\sum_{F\in\mathcal{F}_N}\int_{F}|\vv\cdot \nn \, q_H|\text{d}s
    \\
    &\leq \left(\sum_{F\in\mathcal{F}_\text{int}\cup\mathcal{F}_N}\frac{\|\vv\|^2_{\LL^2(F)}}{\Hc_F}\right)^{1/2}\left(\sum_{F\in\mathcal{F}_\text{int}} \int_{F} \frac{\jump{\Hc_F q_H}^2}{H_{F}}\mathrm{d}s + \sum_{F\in\mathcal{F}_N} \int_{F} \frac{(\Hc_F q_H)^2}{\Hc_F}\mathrm{d}s\right)^{1/2}
    \\
    &\leq C \left(|\vv|^2_{\HH^1(\Omega)}+\frac{1}{H^2_{\text{min}}}\|\vv\|^2_{\LL^2(\Omega)}\right)^{1/2}|\Hc q_H|_{1,H},
    \\
    &\leq C\Hc|\vv|_{\HH^1(\Omega)}|q_H|_{1,H},
\end{align*}
where we have used the Cauchy-Schwarz inequality, the trace theorem (specifically, Young's inequality applied to \cite[Lemma~1.49]{Di_Pietro2011-tq}), and the Poincaré inequality. We observe that the integrals on the Dirichlet boundaries of $\Omega$ disappeared due to $\vv\in \HH^1_{0, D}$. Moreover, since we have assumed that Dirichlet boundaries are separated by the small distance $\DW$, the Poincaré constant behaves as $\DW$, and so the constant in the last line depends on \ref{coarse}. The result now follows from the definition of the norm $\|\cdot\|_*$.
\end{proof}

A key result is contained in the following lemma based on \cite[Lemma~3.3]{dobrowolski2012domain}. 
\begin{lemma}\label{lem:key}
Let $Q_H$ be the projection defined in \eqref{def:Q}. Then there exists a constant $C$ only depending on $C_{\text{shape}}$, $C_{\text{quasi}}$ and the connectivity of the partition such that
\begin{align*}
 \Hc|Q_H q|_{1,H}\leq C\max_{F\in\mathcal{F}_\text{int}\cup\mathcal{F}_N}C(F)\|q\|_{*,\Omega}.
\end{align*}
\end{lemma}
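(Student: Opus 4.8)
The plan is to estimate the discrete seminorm face by face, convert each face contribution into the local $L^2$-oscillation of $q$, control that oscillation by the local inf-sup norm, and only then assemble globally. First I would fix an interior face $F\in\mathcal{F}_\text{int}$ shared by $\Omega_-$ and $\Omega_+$, write $a,b$ for the cell averages of $q$ on $\Omega_-,\Omega_+$ (so $Q_Hq$ equals $a$ on $\Omega_-$, $b$ on $\Omega_+$, and $\jump{Q_Hq}_F=b-a$), and let $\bar q_F$ be the mean of $q$ over the patch $\omega_F:=\Omega_-\cup\Omega_+$. A direct computation yields the variance identity
\[
\|Q_Hq-\bar q_F\|^2_{L^2(\omega_F)}=\frac{|\Omega_-|\,|\Omega_+|}{|\Omega_-|+|\Omega_+|}\,\jump{Q_Hq}_F^2 .
\]
Since $Q_Hq-\bar q_F$ is piecewise constant on $\omega_F$ while $q-Q_Hq$ is $L^2(\omega_F)$-orthogonal to piecewise constants, Pythagoras gives $\|Q_Hq-\bar q_F\|_{L^2(\omega_F)}\le\|q-\bar q_F\|_{L^2(\omega_F)}$. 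Applying the local inf-sup condition on the patch $\omega_F$, whose constant is exactly $C(F)=C(\Omega_-\cup\Omega_+)$, to the mean-zero function $q-\bar q_F$ then produces $\|q-\bar q_F\|_{L^2(\omega_F)}\le C(F)\|q\|_{*,\omega_F}$.

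Combining the identity, the Pythagoras reduction, and the local inf-sup bound with the scaling of the face term, I expect the per-face estimate
\[
\Hc^2\int_F\frac{\jump{Q_Hq}_F^2}{H_F}\,\mathrm{d}s\le C\,C(F)^2\,\|q\|^2_{*,\omega_F},
\]
where the geometric prefactor $\Hc^2\,(|F|/H_F)\,(|\Omega_-|+|\Omega_+|)/(|\Omega_-|\,|\Omega_+|)$ is bounded by a constant depending only on $C_{\text{shape}}$, $C_{\text{grade}}$ and $C_{\text{quasi}}$: shape regularity gives $|\Omega_\pm|\sim H_\pm^d$ and $|F|\le CH_F^{d-1}$, mesh grading makes $H_-,H_+,H_F$ mutually comparable, and quasi-uniformity makes $\Hc\sim H_F$. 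For a Neumann face $F\in\mathcal{F}_N$ with single adjacent cell $\Omega_-$ the same scaling applies, except that $\jump{Q_Hq}_F$ is replaced by the average $a$; because the local problem on $\Omega_-$ carries a genuine Neumann portion the pressure is controlled outright, $\|q\|_{L^2(\Omega_-)}\le C(\Omega_-)\|q\|_{*,\Omega_-}$, so $a^2|\Omega_-|\le C(\Omega_-)^2\|q\|^2_{*,\Omega_-}$ gives the analogous per-face bound with $C(F)=C(\Omega_-)$.

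Finally I would sum over all faces. The patches $\omega_F$ (or the single cells attached to Neumann faces) overlap, but with multiplicity bounded by the connectivity of the partition, so a coloring argument splits $\mathcal{F}_\text{int}\cup\mathcal{F}_N$ into a bounded number $\chi$ of groups within which the patches are pairwise disjoint. Applying the superadditivity estimate of \Cref{lem:infsup-norm} inside each group and summing over the $\chi$ colors yields
\[
\sum_{F}\|q\|^2_{*,\omega_F}\le \chi\,\|q\|^2_{*,\Omega}.
\]
Pulling out the maximum of $C(F)$ then gives $\Hc^2|Q_Hq|^2_{1,H}\le C\,(\max_F C(F))^2\,\|q\|^2_{*,\Omega}$, and taking square roots finishes the proof. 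I expect this last assembly step to be the main obstacle: a naive face-by-face sum would accumulate a factor growing with the number of subdomains, so the crux is combining the finite-overlap/coloring structure with \Cref{lem:infsup-norm} to keep the constant dependent only on the connectivity (together with $C_{\text{shape}}$, $C_{\text{grade}}$, $C_{\text{quasi}}$); the variance identity and the orthogonality reduction are the other essential, but more routine, ingredients.
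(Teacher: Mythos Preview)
Your proposal is correct and follows essentially the same route as the paper: the variance identity relating $\jump{Q_Hq}_F^2$ to the $L^2$-oscillation of $q$ on the face patch $\omega_F$, the local inf-sup bound $\|q-\bar q_F\|_{L^2(\omega_F)}\le C(F)\|q\|_{*,\omega_F}$, and the finite-overlap argument combined with \Cref{lem:infsup-norm} to sum over faces are exactly the ingredients the paper uses (following Dobrowolski). Two small differences are only presentational: the paper obtains the variance inequality by a direct algebraic computation (Jensen on each cell) rather than by your Pythagoras/orthogonality step, and for Neumann faces the paper simply zero-extends $q$ across $F$ to reuse the interior-face argument, whereas you invoke the local inf-sup on $\Omega_-$ in full $L^2$; both work.
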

\begin{proof}
Consider two adjacent subdomains $\Omega_{-}$ and $\Omega_{+}$ sharing the common face $F$. We let $\coarsevol_\pm=|\Omega_\pm|$ and denote by $\overline{q}$ the mean value of $q$ on $\Omega_-\cup\Omega_+$. Then
\begin{align*}
\overline{q} = \frac{q_-+q_+}{2\overline{\coarsevol}}, \quad \text{where} \quad q_{\pm}=\int_{\Omega_{\pm}}q \text{d}x, \quad \text{and} \quad 2\overline{\coarsevol} = \coarsevol_- + \coarsevol_+.
\end{align*}

Following Dobrowolski's \cite{dobrowolski2012domain} calculation we find that
\begin{equation}\label{eq:dobrowolski_jump}
\begin{aligned}
\|q-\overline{q}\|^2_{L^2(\Omega_-\cup\Omega_+)} &= \int_{\Omega_-\cup\Omega_+}\left(|q|^2 - 2q\overline{q} + |\overline{q}|^2\right) dx
\\
&= \int_{\Omega_-}|q|^2dx + \int_{\Omega_+}|q|^2dx - \frac{|q_-+q_+|^2}{2\overline{\coarsevol}}
\\
&\geq \frac{|q_-|^2}{\coarsevol_-} + \frac{|q_+|^2}{\coarsevol_+}- \frac{|q_-+q_+|^2}{2\overline{\coarsevol}}
\\
&=\frac{\coarsevol_+}{\coarsevol_-(2\overline{\coarsevol})}|q_-|^2+\frac{\coarsevol_-}{\coarsevol_+(2\overline{\coarsevol})}|q_+|^2 - \frac{q_-q_+}{\overline{\coarsevol}}
\\
&=\frac{\coarsevol_-\coarsevol_+}{2\overline{\coarsevol}}\left|\frac{q_+}{\coarsevol_+} - \frac{q_-}{\coarsevol_-}\right|^2,
\\
&=\frac{\coarsevol_-\coarsevol_+}{2\overline{\coarsevol}}\jump{Q_H q}_F^2,
\\
&\geq C\int_F\frac{\jump{\Hc_F Q_H q}_F^2}{\Hc_F}\text{d}s.
\end{aligned}
\end{equation}
Now using a local inf-sup, we have
\begin{align*}
\int_F\frac{\jump{\Hc_F Q_H q}_F^2}{\Hc_F}\text{d}s\leq C(F)^2\|q\|^2_{*,\Omega_-\cup\Omega_+}.
\end{align*}

Finally, since the subdomains $\Omega_-\cup\Omega_+$ locally overlap finitely many times, we obtain, by summing up, taking square roots, and applying \Cref{lem:infsup-norm}, the estimate
\begin{align*}
    |\Hc Q_H q|_{1,H}\leq C\max_{F\in\mathcal{F}_i\cup\mathcal{F}_N}C(F)\|q\|_{*,\Omega},
\end{align*} 
where for $F\in\mathcal{F}_N$ we have zero-extended the function $q$ outside the domain $\Omega$.
\end{proof}

\begin{lemma}\label{lem:main-result-Dobrowolski}
    The norm $(\|q-Q_H q\|^2_{L^2(\Omega)} + \Hc^2|Q_H q|^2_{1,H})^{1/2}$ is equivalent to $\|q\|_{*,\Omega}$, with equivalence constants independent of the aspect ratio and size of the domain.
\end{lemma}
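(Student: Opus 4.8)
The plan is to establish the two-sided norm equivalence by directly synthesizing the four preceding lemmas, the key observation being that under the mesh hypotheses \ref{shape-reg}--\ref{coarse} every \emph{local} inf-sup constant that appears is an absolute constant. Throughout I would use that $\|\cdot\|_{*,\Omega}$ is a genuine (semi)norm: it is the dual norm of the bounded functional $\vv\mapsto(\nabla\cdot\vv,\cdot)$ on $\HH^1_{0,D}(\Omega)$, so in particular it satisfies the triangle inequality and is dominated by the $L^2$ norm, $\|q\|_{*,\Omega}\leq\|q\|_{L^2(\Omega)}$, as recorded in \eqref{ineq:classical-Brezzi-conditions}.

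For the upper bound I would bound each summand of the candidate norm separately by $\|q\|_{*,\Omega}$. \Cref{lem:Qh} controls the high-frequency part, $\|q-Q_H q\|_{L^2(\Omega)}\leq(\max_i C(\Omega_i))\|q\|_{*,\Omega}$, while \Cref{lem:key} controls the coarse part, $\Hc|Q_H q|_{1,H}\leq C(\max_{F}C(F))\|q\|_{*,\Omega}$. Using $\sqrt{a^2+b^2}\leq\sqrt{2}\max\{a,b\}$ these combine into
\begin{align*}
\left(\|q-Q_H q\|^2_{L^2(\Omega)}+\Hc^2|Q_H q|^2_{1,H}\right)^{1/2}\leq C\,\Big(\max_i C(\Omega_i)+\max_F C(F)\Big)\|q\|_{*,\Omega}.
\end{align*}

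For the lower bound I would split $q=(q-Q_H q)+Q_H q$, apply the triangle inequality for $\|\cdot\|_{*,\Omega}$, and treat the two pieces with complementary tools: the high-frequency piece via $\|q-Q_H q\|_{*,\Omega}\leq\|q-Q_H q\|_{L^2(\Omega)}$, and the piecewise-constant piece $Q_H q\in Q_H$ via \Cref{lem:trace}, giving $\|Q_H q\|_{*,\Omega}\leq C\Hc|Q_H q|_{1,H}$. Hence
\begin{align*}
\|q\|_{*,\Omega}\leq\|q-Q_H q\|_{L^2(\Omega)}+C\Hc|Q_H q|_{1,H}\leq C\left(\|q-Q_H q\|^2_{L^2(\Omega)}+\Hc^2|Q_H q|^2_{1,H}\right)^{1/2},
\end{align*}
where the last step uses $a+b\leq\sqrt{2}\sqrt{a^2+b^2}$.

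The only point that genuinely requires care -- and the reason the equivalence constants come out independent of the aspect ratio -- is that the local inf-sup constants $C(\Omega_i)$ and $C(F)=C(\Omega_-\cup\Omega_+)$ are $O(1)$. This is precisely where the mesh assumptions enter: by coarseness \ref{coarse} together with shape regularity \ref{shape-reg}, each subdomain $\Omega_i$ has diameter comparable to $\DW$ and is not slender, so its aspect ratio is bounded in terms of $C_{\text{shape}}$; by mesh grading \ref{grading}, two adjacent subdomains have comparable diameters, so $\Omega_-\cup\Omega_+$ likewise has bounded aspect ratio. Thus $\max_i C(\Omega_i)$ and $\max_F C(F)$ are bounded by absolute constants depending only on $C_{\text{shape}}$, $C_{\text{grade}}$, $C_{\text{coarse}}$ and the connectivity of the partition, and no factor of $\DL/\DW$ survives. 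I expect no real obstacle beyond this bookkeeping, since the substantive analysis has already been isolated in \Cref{lem:Qh,lem:trace,lem:key}; the present lemma is essentially their assembly.
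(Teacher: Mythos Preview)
Your proof is correct and follows essentially the same approach as the paper: the upper bound via \Cref{lem:Qh,lem:key}, the lower bound via the triangle inequality for $\|\cdot\|_{*,\Omega}$ together with $\|\cdot\|_{*,\Omega}\leq\|\cdot\|_{L^2}$ and \Cref{lem:trace}. Your additional paragraph spelling out why the local inf-sup constants $C(\Omega_i)$ and $C(F)$ are $O(1)$ under \ref{shape-reg}--\ref{coarse} is a welcome clarification that the paper leaves implicit.
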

\begin{proof}
It follows from \Cref{lem:Qh} and \Cref{lem:key} that
\begin{align*}
    \|q-Q_H q\|_{L^2(\Omega)} + C\Hc|Q_H q|_{1,H} \leq C \|q\|_{*,\Omega}.
\end{align*}
For the other direction, it follows from \Cref{lem:trace} that
   \begin{align*}
\|q\|_{*,\Omega} &\leq \|q-Q_Hq\|_{*,\Omega} + \|Q_Hq\|_{*,\Omega}
\\
&\leq \|q-Q_H q\|_{L^2(\Omega)} + C\Hc|Q_H q|_{1,H}.
\end{align*} 
\end{proof}

\begin{theorem}\label{thm:main}
    The norm $\|q\|_{L^2+\DW H^1}$ is equivalent to $\|q\|_{*,\Omega}$, with equivalence constants independent of the aspect ratio and size of the domain.
\end{theorem}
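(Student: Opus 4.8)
By \Cref{lem:main-result-Dobrowolski} the inf-sup norm $\|q\|_{*,\Omega}$ is already equivalent to the discrete quantity $N_H(q):=\bigl(\|q-Q_H q\|^2_{L^2(\Omega)}+\Hc^2|Q_H q|^2_{1,H}\bigr)^{1/2}$, with constants independent of the aspect ratio. The plan is therefore to turn \Cref{thm:main} into a purely approximation-theoretic statement: prove that $N_H(\cdot)$ is equivalent to the continuous sum norm $\|\cdot\|_{L^2+\DW H^1}$ of \eqref{eq:sum-norm}. In both directions I would repeatedly use that the coarseness property \ref{coarse}, together with the standing slenderness assumption (every point lies within a constant multiple of $\DW$ of $\partial\Omega_D$) and shape regularity \ref{shape-reg}, force $\Hc_j\approx\DW$ for all $j$, hence $\Hc\approx\DW$; this lets me exchange the weight $\Hc$ for $\DW$ at the cost of the mesh constants only.

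For the bound $\|q\|_{L^2+\DW H^1}\le C\,N_H(q)$ I need an admissible competitor $\tilde q\in H^1(\Omega)$ in the infimum defining \eqref{eq:sum-norm}. Since $Q_H q$ is only piecewise constant, I would first smooth it by a Cl\'ement-type averaging operator $E\colon Q_H\to H^1(\Omega)$ (for instance the continuous piecewise-linear function whose vertex values are local averages of the piecewise-constant data over the surrounding cells). Standard estimates for such an operator, with constants controlled by \ref{shape-reg}, \ref{grading} and the connectivity, give the approximation bound $\|Q_H q-E(Q_H q)\|_{L^2(\Omega)}\le C\,\Hc|Q_H q|_{1,H}$ and the stability bound $|E(Q_H q)|_{H^1(\Omega)}\le C|Q_H q|_{1,H}$, where the discrete seminorm \eqref{eq:DeltaH} supplies exactly the face-jump data that drives both estimates. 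Taking $\tilde q=E(Q_H q)$, the triangle inequality $\|q-\tilde q\|_{L^2}\le\|q-Q_H q\|_{L^2}+\|Q_H q-\tilde q\|_{L^2}$ together with $\DW|\tilde q|_{H^1}\le C\DW|Q_H q|_{1,H}\le C\Hc|Q_H q|_{1,H}$ then yields the claim.

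For the reverse bound $N_H(q)\le C\|q\|_{L^2+\DW H^1}$ I would fix an arbitrary $\tilde q\in H^1(\Omega)$, estimate the two pieces of $N_H(q)$ separately, and pass to the infimum over $\tilde q$ at the end. For the $L^2$ piece, since $Q_H q$ is the best $L^2$ approximation of $q$ in $Q_H$ and $Q_H\tilde q\in Q_H$, I have $\|q-Q_H q\|_{L^2}\le\|q-Q_H\tilde q\|_{L^2}\le\|q-\tilde q\|_{L^2}+\|\tilde q-Q_H\tilde q\|_{L^2}$, and the subdomain-wise Poincar\'e inequality (via \ref{shape-reg}) gives $\|\tilde q-Q_H\tilde q\|_{L^2}\le C\Hc|\tilde q|_{H^1}\le C\DW|\tilde q|_{H^1}$. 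For the seminorm piece I split $Q_H q=Q_H(q-\tilde q)+Q_H\tilde q$: a discrete inverse inequality for piecewise constants, $\Hc|p_H|_{1,H}\le C\|p_H\|_{L^2(\Omega)}$, which follows face by face by comparing the face term $|F|\jump{p_H}_F^2/H_F$ with the cell masses $\|p_H\|^2_{L^2(\Omega_\pm)}$ through \ref{shape-reg} and summing with finite overlap, applied with $p_H=Q_H(q-\tilde q)$ together with the $L^2$-stability $\|Q_H(q-\tilde q)\|_{L^2}\le\|q-\tilde q\|_{L^2}$ gives $\Hc|Q_H(q-\tilde q)|_{1,H}\le C\|q-\tilde q\|_{L^2}$, while the stability of the averaged gradient, $|Q_H\tilde q|_{1,H}\le C|\tilde q|_{H^1}$, gives $\Hc|Q_H\tilde q|_{1,H}\le C\DW|\tilde q|_{H^1}$. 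Summing the two pieces closes the argument.

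The main obstacle is establishing the three mesh-level estimates — the smoothing operator's approximation/stability pair, the discrete inverse inequality, and the gradient-stability $|Q_H\tilde q|_{1,H}\le C|\tilde q|_{H^1}$ — with constants that genuinely depend only on \ref{shape-reg}, \ref{grading}, \ref{coarse} and the connectivity, and in particular not on the number of cells $N$ nor on the global aspect ratio $\DL/\DW$. The finite-overlap property of the partition is what keeps the face summations uniform, and the identification $\Hc\approx\DW$ is what converts the discrete $\Hc$-weighting into the $\DW$-weighting of the target sum norm; checking that $\Hc\approx\DW$ is \emph{forced} by the slenderness and coarseness hypotheses, rather than merely assumed, is the one place where the geometric standing assumptions are essential.
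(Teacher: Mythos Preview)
Your proposal is correct and follows essentially the same route as the paper: reduce to \Cref{lem:main-result-Dobrowolski}, then smooth $Q_Hq$ into $H^1$ via a nodal-averaging operator (the paper names it the Oswald quasi-interpolant, with the same approximation and stability bounds you state) for one direction, and for the other direction split $Q_Hq=Q_H(q-\tilde q)+Q_H\tilde q$ and invoke the discrete inverse inequality together with the $H^1$-stability of $Q_H$. Your explicit discussion of why $\Hc\approx\DW$ is forced by the standing hypotheses is a welcome clarification that the paper leaves implicit.
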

\begin{proof}
We first construct a coarse mesh as described in this section and apply \Cref{lem:main-result-Dobrowolski}.
Now, let $S_H:Q_H\to P^1_H$ be the Oswald quasi-interpolant \cite{oswald1993bpx}, which constructs a continuous piecewise linear function by averaging all nodal values. It satisfies the error estimate
\begin{align*}
    \|q_H-S_Hq_H\|\leq CH|q_H|_{1,H},
\end{align*}
and the stability estimate
\begin{align*}
    \|\grad S_Hq_H\|\leq C|q_H|_{1,H},
\end{align*}
see, e.g., \cite[Chapter~5.5.2]{Di_Pietro2011-tq}.
It then follows that
\begin{align*}
    \|q-S_HQ_Hq\|+\DW\|\grad S_HQ_Hq\| 
    &\leq \|q-Q_Hq\|+\|Q_Hq-S_HQ_Hq\|+\DW\|\grad S_HQ_Hq\|
    \\
    &\leq \|q-Q_Hq\|+CH|Q_Hq|_{1,H}
\end{align*}
and so 
\begin{align*}
   \|q\|_{L^2+\DW H^1}^2 \leq C (\|q-Q_Hq\|^2+|HQ_Hq|_{1,H}^2).
\end{align*}

For the other direction, we observe that for any $\tilde{q}\in H^1(\Omega)$ we have
\begin{align*}
    \|q-Q_Hq\| &\leq \|q-\tilde{q}\|+ \|Q_H(\tilde{q}-q)\| + \|\tilde{q}-Q_H\tilde{q}\|,
    \\
    &\leq 2\|q-\tilde{q}\| + CH\|\grad\tilde{q}\|,
\end{align*}
and
\begin{align*}
    |HQ_Hq|_{1,H}&\leq |HQ_H(q-\tilde{q})|_{1,H}+|HQ_H\tilde{q}|_{1,H}
    \\
    &\leq C\|Q_H(q-\tilde{q})\|+ C\|H\grad\tilde{q}\|,
    \\
    &\leq C(\|q-\tilde{q}\|+ \|H\grad\tilde{q}\|),
\end{align*}
where we have used the inverse inequality and the stability of the $L^2$-projection in $H^1$: $|Q_H\tilde{q}|_{1,H}\leq C \|\grad\tilde{q}\|$. Both hold on a coarse (in general, quasi-uniform) mesh. Thus,
\begin{align*}
    \|q-Q_Hq\|^2+|HQ_Hq|_{1,H}^2 \leq C(\|q-\tilde{q}\|^2+\DW^2\|\grad\tilde{q}\|^2)
\end{align*}
for any $\tilde{q}\in H^1(\Omega)$ and so it is also true for the infimum in \eqref{eq:sum-norm}.
\end{proof}

\begin{remark}\label{rmrk:varW}
    Note that a more careful analysis using the norm 
    \begin{align*}
    \|q-Q_Hq\|^2+\sum_{F}|H_FQ_Hq|_{1,H_F}^2,
    \end{align*}
    where 
    \begin{align*}
        |q_H|_{1,H_F} = \int_{F} \frac{\jump{q_H}^2}{H_{F}}\mathrm{d}s,
    \end{align*}
    would lead to norm equivalence with the more general sum-norm 
    \begin{equation}\label{eq:varW}
        \inf_{\tilde{q}\in H^1(\Omega)}\sqrt{\|q-\tilde{q}\|^2_{L^2(\Omega)}+|\DW\tilde{q}|^2_{H^1(\Omega)}}
    \end{equation}
    in \Cref{thm:main}, where $\DW(\mathbf{x})$ is now a function describing a locally shortest diameter. This would lead to the more general term $\nabla\cdot(\DW(\cdot)^2\grad)$
    instead of $\DW^2\Delta$ in our preconditioner \eqref{eq:precond_espen}.
\end{remark}

\begin{remark}
It follows from \cref{thm:main} that $\|q\|_{L^2+\DW H^1}$ should be a natural norm to use for the pressure variable in the Stokes equation on slender domains, if the goal is to obtain error estimates independent of the aspect ratio. Of course, changing the norm used to measure the error will not change the approximation itself (unless the norm is used in an adaptive method), so it is unclear whether there is any gain in using this norm for error estimates instead of the standard $L^2$ norm.
\end{remark}

\section{Model order reduction of Stokes in thin channels}\label{sec:coarse}
\begin{figure}[!htb]
    \centering
    \includegraphics[width=0.8\linewidth]{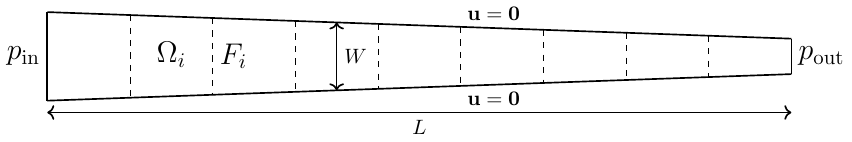}  
    \caption{Pressure-driven Stokes flow scenario in a simple channel domain (large aspect ratio, $L/W \ll 1$, small angle) discretized by coarse grid discretization with elements $\Omega_i$ (approximate aspect ratio $1$) and element faces $F_i$.}
    \label{fig:channel}
\end{figure}
The goal of this section is to estimate the prefactor $\alpha$ in \eqref{eq:precond_espen} and \eqref{eq:precond_manfred}. To understand Stokes flow in thin domains, it is instructive to show how the equations simplify for large aspect ratios. In this section, we explore a link between the suggested coarse-grid pressure operator and a dimensionally reduced Stokes problem.
Consider the dimensional Stokes equations in 3$d$ Cartesian coordinates $(x,y,z)$ where the z-dimension (length scale $\DW$) is much smaller than the other dimensions $ \mathbf{x}_\perp = (x,y)^T$ (length scale $\DL$). We split the equations into a momentum balance in the short direction and the remaining directions and non-dimensionalize using two different length scales: $\mathbf{x}_\perp = \mathbf{x}_\perp^\star \DL$, and $z = z^\star \DW$. We will consider the case of large aspect ratios $\epsilon:= \DW/\DL \ll 1$. With a characteristic velocity scale $\uu_\perp = U \uu_\perp^\star$, $u_z = \epsilon U u_z^\star$ and pressure scale $p = P p^\star$. In principally pressure-driven flow, we expect a pressure scale of $P = \frac{\mu U}{\epsilon D} =\frac{\mu U}{L \epsilon^2}$ (the typical pressure scale in lubrication theory~\cite{Batchelor2000}, $\mu$ denoting dynamic fluid viscosity) with longitudinal pressure gradient being balanced by the dominating viscous stresses. The non-dimensionalized equations in operator notation (with variables split into the component in the short direction and its complement) are
\begin{align}
\begin{pmatrix}
\epsilon^2 \Delta_\perp^{\star} + \partial_{z^\star}^2 & 0 & -\nabla_\perp^\star \\
0 & \epsilon^4 \Delta_\perp^{\star} + \epsilon^2\partial_{z^\star}^2 & -\partial_{z^\star} \\
\nabla_\perp^\star \cdot & \partial_{z^\star} & 0
\end{pmatrix}
\begin{pmatrix}
\uu_\perp^\star \\
u_z^\star \\
p^\star
\end{pmatrix}
=
\begin{pmatrix}
0 \\
0 \\
0
\end{pmatrix}.
\end{align}
This is approximately equal (up to an error $\mathcal{O}(\epsilon^2)$) to solving the simplified equations
\begin{align}
\begin{pmatrix}
\partial_{z^\star}^2 & 0 & -\nabla_\perp^\star \\
0 & 0 & -\partial_{z^\star} \\
\nabla_\perp^\star \cdot & \partial_{z^\star} & 0
\end{pmatrix}
\begin{pmatrix}
\uu_\perp^\star \\
u_z^\star \\
p^\star
\end{pmatrix}
=
\begin{pmatrix}
0 \\
0 \\
0
\end{pmatrix}.
\label{eq:reduced}
\end{align}

Now we consider a straight channel of height $W$ and length $L$. (In fact, since $\epsilon \ll 1$, the channel height may also vary slowly in $\mathbf{x}$, $W = W(\mathbf{x})$, without sacrificing the quality of approximation made above.)

Going back to dimensional quantities,
integrating the tangential momentum balance (twice) over $z$ with boundary conditions $\uu = \mathbf{0}$ (no slip and no penetration) for $z=0$ and $z=\DW$ yields an explicit solution for $\uu_\perp$ in terms of $p$:

\begin{equation}
\uu_\perp = -\frac{\DW^2}{12 \mu} \nabla_\perp p.
\label{eq:parplate}
\end{equation}

Moreover, we see from \eqref{eq:reduced} that $p$ does not vary with $z$ at fixed $\mathbf{x}$.
This observation also motivates a partition (akin to the partitioning used in our proof above) of the channel domain in a way that elements are channel sections (full height $W$) with an aspect ratio close to $1$; see \cref{fig:channel}.

Let us denote partition elements by $\Omega_i$ and let $q$ be a piecewise constant function with respect to the partitions $\Omega_i$, with $q_i$ the value of $q$ on $\Omega_i$. Moreover, let $\mathcal{N}_{i}$ be the set of elements $\Omega_j$ that share a face with $\Omega_i$, $\sigma_{ij}$ is the face shared by $\Omega_i$ and $\Omega_j$, and $\mathbf{n}_{\sigma_{ij}}$ denote a unit normal vector on $\sigma_{ij}$ pointing towards $\Omega_j$. Integration of the mass balance equation over the channel domain $\Omega$, using \eqref{eq:parplate}, and the no-slip, no-penetration boundary conditions on $z = 0$ and $z = W$ gives, similar to the proof of \cref{lem:trace}, that:

\begin{equation}
\label{eq:connection}
\begin{split}
\int_\Omega \nabla \cdot \uu \, q\text{d}x &= \sum\limits_{i=0}^N \int_{\Omega_i} \nabla \cdot \uu \, q_i\text{d}x = \sum\limits_{i=0}^N \int_{\partial\Omega_i} \uu \cdot \nn \, q_i\text{d}s \\
&= \sum\limits_{i=0}^N \sum\limits_{\Omega_j \in \mathcal{N}_{i}}\int_{\sigma_{ij}} \uu \cdot \mathbf{n}_{\sigma_{ij}} \, q_i \text{d}s + \sum\limits_{F \in \mathcal{F}_{N}} \int_{F} \uu \cdot \mathbf{n}_{F} q \,\text{d}s \\ 
&= \sum\limits_{F \in \mathcal{F}_\text{int}} \int_{F} \uu \cdot \mathbf{n}_{F} \, \jump{q}_F \text{d}s + \sum\limits_{F \in \mathcal{F}_{N}} \int_{F} \uu \cdot \mathbf{n}_{F} q \,\text{d}s \\ 
&= \sum\limits_{F \in \mathcal{F}_\text{int}} \int_{F} -\frac{\DW^2}{12 \mu} \nabla_\perp p \cdot \mathbf{n}_{F} \jump{q}_F \,\text{d}s
+ \sum\limits_{F \in \mathcal{F}_{N}} \int_{F} -\frac{\DW^2}{12 \mu} \nabla_\perp p  \cdot \mathbf{n}_{F} \, q\text{d}s\\
&\approx \sum\limits_{F \in \mathcal{F}_\text{int}} \int_{F}\frac{\DW^2}{12 \mu} \frac{1}{\ell_{F}} \jump{p}_F \jump{q}_F \,\text{d}s
+ \sum\limits_{F \in \mathcal{F}_{N}} \int_{F} \frac{\DW^2}{12 \mu}\frac{1}{\ell_{F}} \jump{p}_{F} \, q\text{d}s \quad \forall q.
\end{split}
\end{equation}
Here, we used mass conservation ($\uu \cdot \nn_{\sigma_{ij}} + \uu \cdot \nn_{\sigma_{ji}} = 0$) to summarize the face integrals over coinciding faces $\sigma_{ij}$ and $\sigma_{ji}$ (or interior face $F$), noting that for any $F$ (and normal $\nn_{F} \equiv \nn_{\sigma_{ij}}$),
\begin{equation*}
\begin{split}
    \uu \cdot \nn_{\sigma_{ij}} q_i + \uu \cdot \nn_{\sigma_{ji}} q_j &= (\uu \cdot \nn_{\sigma_{ij}} + \uu \cdot \nn_{\sigma_{ji}}) \frac{q_i + q_j}{2} + \frac{\uu \cdot \nn_{\sigma_{ij}} - \uu \cdot \nn_{\sigma_{ji}}}{2} (q_i - q_j)  \\
    &= \uu \cdot \nn_{F} \jump{q}_F,
\end{split}
\end{equation*}
with $\jump{q}_F = q_i - q_j$.
The approximation of the face integrals in \eqref{eq:connection} is based on a simple two-point flux approximation ($\nabla_\perp p \cdot \mathbf{n}_{\sigma_{ij}} \approx -\jump{p}/\ell_{F}$) where $\ell_{F}$ denotes the distance between the centroids of the neighboring subdomains $\Omega_i$ and $\Omega_j$ sharing face $F$. On the boundary, $\ell_{F}$ is the distance from the centroid of $\Omega_i$ to the centroid of the boundary face $F$, and $\jump{p}_{F} = p_i - p_F$ denotes the difference between boundary data $p_F$ and element average pressure $p_i$.

With $\ell_F \approx H_F$ (by ``shape regularity" of the coarse partition, as discussed in \Cref{sec:theory}), we observe that the expression in the last row of \eqref{eq:connection} matches \eqref{eq:laplace} up to prefactor $\tfrac{1}{12}$ (and up to the second-order approximation for the boundary terms). (Note that in formulation \eqref{eq:laplace}, the constant $\mu$ is absorbed into the pressure following our initial problem statement \eqref{eq:stokes}.)

\begin{remark} While the prefactor $\tfrac{1}{12}$ appears for channel domains of the ``parallel plates''-type, (which we will use in the numerical examples in \Cref{sec:numerics},) prefactors for other cross-sectional geometries can be readily derived or computed, see e.g. \cite[Chap 3.3]{white2006viscous}. If the short directions consist of a cross-sectional plane $S$ of any shape, then the prefactor can be estimated as 
\begin{align}\label{eq:prefactor}
    \frac{1}{W^2|S|}\int_S\! u\, \text{d}s,
\end{align}
where $u$ solves $\Delta_S u = 1$ in $S$ with $u=0$ on $\partial S$. 
\end{remark}
\begin{remark}
As a continuation of the previous remark, it is interesting to note that \eqref{eq:prefactor} can be directly related to the Poincaré constant as follows. First use integration by parts to find that
\begin{align}\label{eq:integrate-u}
   \int_S u\, \text{d}s= \int_S u \Delta_S u \, \text{d}s= -\int_S |\nabla_S u|^2\, \text{d}s.
\end{align}
By combining \eqref{eq:integrate-u} with the Cauchy-Schwarz inequality and the Poincaré inequality, we get
\begin{align}
    \left(\int_S u\, \text{d}s\right)^2\leq |S| \int_S u^2\, \text{d}s \leq |S| C_p(S)^2 \int_S |\nabla_S u|^2\, \text{d}s = |S| C_p(S)^2 \left|\int_S u\, \text{d}s\right|,
\end{align}
which leads to the upper bound
\begin{align*}
    \frac{1}{W^2|S|}\left|\int_S u\, \text{d}s\right| \leq \frac{C_p(S)^2}{W^2}.
\end{align*}
For a $1d$ cross-section, we have that $C_p(S)=\tfrac{W}{\pi}$, and in this case we get
\begin{align*}
    \frac{1}{W^2|S|}\left|\int_S u\, \text{d}s\right| \leq \frac{1}{\pi^2}
\end{align*}
which is not too far off from $\tfrac{1}{12}$. Consequently, a (close to) sharp Poincaré constant for the cross-section can provide a useful estimate of the prefactor $\alpha$. We also verify this numerically in the next section.
\end{remark}
We will investigate an optimal prefactor numerically in the next section and show that the above derivation provides relevant insight into the optimal constant, which the proof of \Cref{thm:main} does not capture exactly.

\section{Numerical experiments}\label{sec:numerics}
Here we present numerical experiments that complement theoretical analysis from previous sections and,
in particular, demonstrate domain robustness of the preconditioners \eqref{eq:precond_espen} and \eqref{eq:precond_manfred}. In all examples, condition numbers reported are estimated for the preconditioned operator ($\mathcal{B}_h\mathcal{A}_h$) using iterative methods. We first revisit the channel flow problems from \Cref{ex:noslip,ex:freeslip}.
Experiments with finite elements and finite volumes are implemented based on the FEniCS~\cite{logg2012automated} and DUNE/DuMu\textsuperscript{x}~\cite{Dune210,Koch2021} libraries, respectively.

\begin{example}[Channel flow]\label{ex:channels}
  We apply preconditioners \eqref{eq:precond_espen}, \eqref{eq:precond_manfred} to the channel $(0, \DL)\times(0, 1)$ flow problems
  setup in \Cref{ex:noslip,ex:freeslip} where the performance of standard preconditioner \eqref{eq:B_standard} deteriorated 
  when $L\gg 1$. In this example we pick $\alpha=1$.
  
  For the preconditioner $\mathcal{B}_H$ the space $Q_H$ and
  operator $-\Delta_H$ are discretized on the quadrilateral mesh with $L$ square cells irrespective of the mesh size in $\Omega_h$.
    We recall that the pressure Laplacians in the preconditioners are defined in \eqref{eq:laplace} and, in particular, the
    boundary conditions on $\Gamma_N$ are enforced weakly. In fact, in the numerical experiments (results not presented here), we observe that strong enforcement 
    of this condition, which appears as a natural choice with $\mathbb{P}_1$ FE discretization, does not yield an $h$-robust preconditioner 
    \eqref{eq:precond_espen}.
    If $\lvert \Gamma_N \rvert=0$
    the Laplacians are singular and $(-\Delta)^{-1}$ is then understood as the pseudoinverse.
    Finally, in the application of the preconditioner $\mathcal{B}_H$ the mapping between
    $Q_h$ and $Q_H$ is realized by $L^2$-projection. In particular, a degree of freedom
    in $Q_H$ is determined by taking the mean value of functions in $Q_h$ over the corresponding
    quadrilateral coarse cell, see \Cref{fig:coarse_mesh} and \eqref{def:Q}.
  
\begin{figure}[!htb]
  \centering
  \includegraphics[width=\textwidth]{./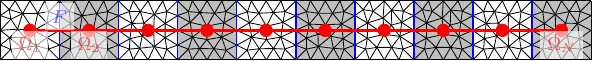}
  \vspace{-15pt}
  \caption{
  Coarse partitioning of domain $\Omega=(0, 10)\times (0, 1)$, cf. \Cref{sec:theory}, in terms of quadrilateral 
  mesh $\Omega_H$ with $1\times 1$ cells $\Omega_i$, $1\leq i \leq 10$. Operator $-\Delta_H$ in \eqref{eq:precond_manfred} is discretized on the space $Q_H$
  of piecewise constant functions with respect to $\Omega_H$ 
  leading to a ``one-dimensional'' Laplacian (with its ``segment'' mesh in red). Pressure space $Q_h$ is defined over the triangular mesh. Here the
  mesh for refinement level $l=0$, cf. \Cref{ex:noslip}, is shown.
  }
  \label{fig:coarse_mesh}
\end{figure}

  Applying discretization with Taylor-Hood elements and using the same solver setup as in \Cref{ex:noslip,ex:freeslip}
  we observe in \Cref{tab:espen_manfred_TD} and \Cref{tab:espen_manfred_FD} that both preconditioners \eqref{eq:precond_espen}, \eqref{eq:precond_manfred}
  yield bounded condition numbers and MinRes iterations as the aspect ratio of the domain increases. We note that the preconditioners are very similar in their performance. Notably, with $\mathcal{B}_H$ we have $\text{dim}Q_H=L$, i.e., the cost
  of assembly and inversion of operator $-\Delta_H$ is constant with mesh refinement, cf. $-\Delta$ on $Q_h$ in $\mathcal{B}$. However,
  $\mathcal{B}$ can be more practical/straightforward for implementation as no additional discretization is required.
\begin{table}
  \centering
  \caption{Performance of preconditioners \eqref{eq:precond_espen}, \eqref{eq:precond_manfred}
    for the problem on the domain $(0, L)\times (0, 1)$ with mixed boundary conditions where traction is set on short left and right edges and
    no-slip condition is prescribed on long edges, cf. \Cref{tab:B_standard_TTD} for results with standard preconditioner \eqref{eq:B_standard}. Condition number
    of the preconditioned problem
    together with MinRes iterations (in parentheses)
    is shown for different levels of mesh refinement $l$. Discretization by Taylor-Hood $[\mathbb{P}_2]^2\times\mathbb{P}_1$ elements.
    Preconditioner $\mathcal{B}_H$ uses mesh $\Omega_H$ of $1\times 1$ quadrilateral cells, cf. \Cref{fig:coarse_mesh}.
    Solver setup is identical to \Cref{ex:noslip}.
  }
\label{tab:espen_manfred_TD}
\footnotesize
\setlength{\tabcolsep}{3pt}    {
  \begin{tabular}{c|llll||llll}
    \hline
    & \multicolumn{4}{c||}{$\mathcal{B}$} & \multicolumn{4}{c}{$\mathcal{B}_H$}\\
    \hline
    \diagbox{$L$}{$l$} & 1 & 2 & 3 & 4 & 1 & 2 & 3 & 4\\
    \hline
3 & 11.75(49) & 11.77(50) & 12.54(51) & 12.95(52)    & 13.69(50) & 13.71(50) & 13.9(51) & 13.98(52)\\ 
5 & 14.56(48) & 16.06(56) & 17(57) & 17.51(58)       & 17.91(49) & 18.55(56) & 18.92(56) & 19.1(59)\\ 
10 & 19.04(67) & 20.28(74) & 21(77) & 21.4(80)       & 21.38(70) & 22.35(74) & 22.97(78) & 23.29(83)\\
20 & 21.36(84) & 22.23(87) & 22.66(92) & 22.88(95)   & 22.45(82) & 23.57(91) & 24.32(91) & 24.72(97)\\
50 & 22.37(88) & 23.08(92) & 23.33(97) & 23.42(95)   & 22.72(86) & 23.91(91) & 24.73(97) & 25.15(99)\\
100 & 22.55(86) & 23.24(91) & 23.46(93) & 23.53(95)  & 22.75(86) & 23.94(89) & 24.76(93) & 25.21(97)\\    
\hline
  \end{tabular}}
\end{table}

\begin{table}
  \centering
\caption{Performance of preconditioners \eqref{eq:precond_espen}, \eqref{eq:precond_manfred}
  for problem on domain $(0, L)\times (0, 1)$ with mixed boundary conditions, no-slip on bottom edge, free-slip \eqref{eq:freeslip} on top edge
  and traction on the short lateral edges. For this setup, the standard preconditioner \eqref{eq:B_standard} fails, as shown in \Cref{ex:freeslip}.
    Condition number
    of the preconditioned problem
    together with MinRes iterations (in parentheses)
    is shown for different levels of mesh refinement $l$. Discretization by Taylor-Hood $[\mathbb{P}_2]^2\times\mathbb{P}_1$ elements.
    Preconditioner $\mathcal{B}_H$ uses mesh $\Omega_H$ of $1\times 1$ quadrilateral cells, cf. \Cref{fig:coarse_mesh}.
    Solver setup is identical to \Cref{ex:noslip}.}
\label{tab:espen_manfred_FD}
\footnotesize
\setlength{\tabcolsep}{3pt}    {
  \begin{tabular}{c|llll||llll}
    \hline
    & \multicolumn{4}{c||}{$\mathcal{B}$} & \multicolumn{4}{c}{$\mathcal{B}_H$}\\
    \hline
    \diagbox{$L$}{$l$} & 1 & 2 & 3 & 4 & 1 & 2 & 3 & 4\\
    \hline
1 & 5.63(45) & 5.64(45) & 5.65(45) & 5.68(46)  &  5.59(44) & 5.61(45) & 5.62(45) & 5.65(46)\\
3 & 7.34(45) & 7.34(45) & 7.33(45) & 7.32(47)  &  7.28(44) & 7.29(45) & 7.29(45) & 7.29(48)\\
5 & 7.34(45) & 7.34(46) & 7.33(46) & 7.32(49)  &  7.28(45) & 7.29(45) & 7.29(45) & 7.29(49)\\
10 & 7.34(47) & 7.34(48) & 7.33(51) & 7.32(51) &  7.28(45) & 7.29(45) & 7.29(47) & 7.29(48)\\
20 & 7.34(48) & 7.34(48) & 7.33(51) & 7.32(51) &  7.28(45) & 7.29(45) & 7.29(47) & 7.29(47)\\
50 & 7.34(49) & 7.34(50) & 7.33(52) & 7.32(51) &  7.28(45) & 7.29(45) & 7.29(47) & 7.29(47)\\
100 & 7.34(49) & 7.34(49) & 7.33(51) & 7.32(51)&  7.28(45) & 7.29(44) & 7.29(47) & 7.29(47)\\    
\hline
  \end{tabular}}
\end{table}
\end{example}

For the flow in domains with variable thickness \Cref{rmrk:varW} notes on a stable inf-sup condition
with respect to the pressure space $L^2+WH^1$ where the weight $W$ varies in space describing
the local smallest diameter of the domain. The refined analysis yields preconditioners of the form
\begin{align}\label{eq:precond_varW}
 \mathcal{B} &= 
 \left(\begin{array}{cc}
      -\Delta^{-1} &  \\
       & I^{-1} + (-\nabla\cdot(\alpha \DW^2\nabla))^{-1} 
 \end{array}\right),
 \intertext{or}
 \mathcal{B}_H &= 
 \left(\begin{array}{cc}
      -\Delta^{-1} &  \\
       & I^{-1} + (-\nabla_H\cdot(\alpha \DW^2\nabla_H))^{-1} 
 \end{array}\right). \label{eq:precond_varW_coarse}
\end{align}
In \Cref{ex:varW} the preconditioner in \eqref{eq:precond_varW} is compared to \eqref{eq:precond_espen} where constant $W$ weighting is used.

\begin{example}[Variable $W$]\label{ex:varW}
  We consider Stokes flow \eqref{eq:stokes} in a channel domain with constrictions, as shown in \Cref{fig:varW}.
  The constrictions are parameterized in terms of the parameter $0.1 \leq r \leq 0.49$ such that $r=0.5$
  represents a blocked channel.

  Given the domain, we let $\partial\Omega_D$ be the top and bottom boundaries and compare the preconditioner $\mathcal{B}$
  in \eqref{eq:precond_espen} which uses the constant scaling $W=\min_{\mathbf{x}}W(\mathbf{x})$ with preconditioner \eqref{eq:precond_varW}. As in the previous example we pick $\alpha=1$ for both preconditioners.
  Interestingly, \Cref{tab:varW} reveals that in this example constant scaling leads
  to faster convergence for $0.1 \leq r \leq 0.4$ while for very tight constrictions
  $r\geq 0.45$, only the variable-thickness preconditioner yields domain-robustness. Specifically,
  we observe that with the latter, the iterations remain bounded in mesh and $r$ at roughly $100$ while
  with constant scaling, the iterations grow rapidly as the constrictions tighten. We remark that the geometries
  are discretized such that the element size $h$ (also at the coarsest refinement level, $l=0$) is smaller than $\min_{\mathbf{x}}W(\mathbf{x})$.
  
\begin{figure}
  \centering
  \includegraphics[width=0.25\textwidth]{./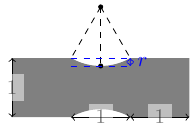}
  \includegraphics[width=0.9\textwidth]{./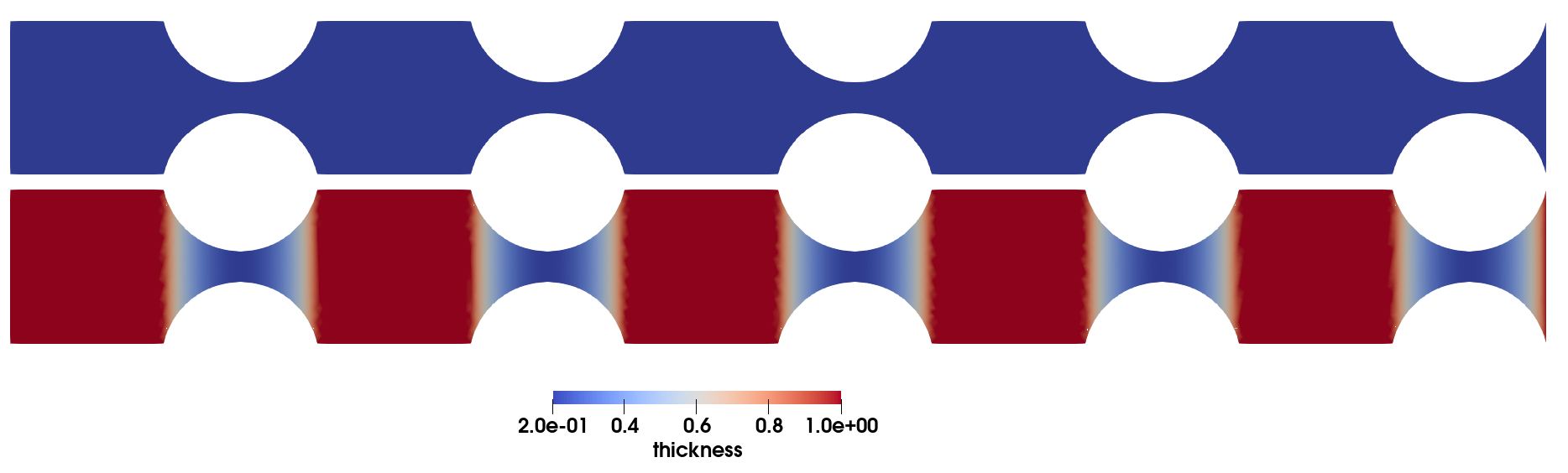}
  \caption{
    Channel with constrictions. (Top) Parameterization of the constriction in terms of parameter $r$.
    (Bottom) Thickness functions $W$ for case $r=0.4$. With \eqref{eq:precond_espen} minimal thickness
    is used, $\min_{\mathbf{x}}W(\mathbf{x})$, while with \eqref{eq:precond_varW} $W$ varies in space.
  }
  \label{fig:varW}
\end{figure}

\begin{table}
  \centering
  \caption{
    Performance of constant thickness preconditioner \eqref{eq:precond_espen} and variable-thicknes preconditioner \eqref{eq:precond_varW}
    for Stokes flow in channel with constrictions \Cref{fig:varW}.
    In \eqref{eq:precond_espen} we set $\min_{\mathbf{x}}W(\mathbf{x})$.
    Discretization by $[\mathbb{P}_2]^2\times\mathbb{P}_1$.    
  }
  \footnotesize{
  \begin{tabular}{c|lll||lll}
    \hline
    & \multicolumn{3}{c||}{Constant $W$ \eqref{eq:precond_espen}} & \multicolumn{3}{c}{Variable $W$ \eqref{eq:precond_varW}} \\
    \hline
    \backslashbox{$r$}{$l$} & 1 & 2 & 3 & 1 & 2 & 3\\
    \hline
0.1 & 15.4(66) & 16.52(69) & 17.24(73)           & 19.5(69) & 20.7(74) & 21.43(77)\\     
0.2 & 13.37(63) & 15.74(72) & 16.36(77)          & 22.11(72) & 23.43(81) & 23.72(87)\\   
0.3 & 15.88(70) & 16.62(74) & 16.88(76)          & 26.59(83) & 25.93(91) & 25.74(95)\\   
0.4 & 20.29(82) & 20.09(85) & 19.93(84)          & 27.88(92) & 26.52(97) & 25.84(100)\\  
0.45 & 32.9(130) & 32.06(128) & 31.48(120)       & 25.86(98) & 25.25(100) & 24.93(97)\\  
0.46 & 39.63(142) & 38.8(160) & 38.24(142)       & 25.53(99) & 24.92(102) & 24.68(100)\\ 
0.47 & 51.55(187) & 50.4(194) & 49.82(176)       & 25.21(102) & 24.61(100) & 24.43(97)\\ 
0.48 & 75.49(270) & 74.33(285) & 73.7(258)       & 24.6(102) & 24.26(98) & 24.17(96)\\   
0.49 & 55.82(551) & 55.3(546) & 148(521)         & 24.11(104) & 23.97(104) & 23.92(102)\\
\hline
  \end{tabular}}
  \label{tab:varW}
\end{table}
  
\end{example}


In \Cref{sec:coarse}, we have derived, through dimensional analysis, a reduced Stokes flow model, which simplifies to the Laplace equation in channel direction with a specific coefficient, namely, $\tfrac{1}{12}W^2$.
Here, the factor $\tfrac{1}{12}$ arises from the channel geometry (``parallel plates'') and the boundary conditions.
However, in the examples thus far, this prefactor $\alpha$ has been ignored. Therefore, we next investigate preconditioners  \eqref{eq:precond_espen} and \eqref{eq:precond_manfred} with varying $\alpha$.

\begin{example}[Optimal scaling]\label{ex:optimize}
  We consider Stokes flow \eqref{eq:stokes} in channel domains $\Omega_{W}=(0, L)\times(0, W)$, $L=10$ and $W=1, 0.25, 0.125$. 
  Here, the channel height is varied so that several domains are investigated for which the model order reduction assumption in \Cref{sec:coarse}, 
  i.e. $\epsilon=W/L\ll 1$, are valid to different extents. As in \Cref{ex:noslip} we let $\partial\Omega_N$ be the $W$-long lateral boundaries and
  prescribe no-slip conditions on top and bottom boundaries.

  We discretize the problem by $[\mathbb{P}_2]^2\times\mathbb{P}_1$ and $[\mathbb{P}_2]^2\times\mathbb{P}_0$ elements (FE), and staggered-grid finite volumes (FV) \cite{Harlow1965} (to investigate if
  discretization plays a role). For the FV scheme, we also compare preconditioners \eqref{eq:precond_espen} and \eqref{eq:precond_manfred}. Applying the solver settings from \Cref{ex:noslip},
  we plot in \Cref{fig:optimize} (FE) and \Cref{fig:optimize-fv} (FV) the dependence of the condition number and MinRes iterations on $0 < \alpha \leq 4$.
  For the FE schemes the finest meshes, $l=4$ in \Cref{ex:noslip} corresponding to $h=3\cdot 10^{-2}$, are used. For the FV scheme, uniform Cartesian grids with $h=0.02$ are used.
  We observe that with all discretization schemes, the results are similar for $W=0.25$ and $W=0.125$ while the dependence on $\alpha$ is somewhat
  different for $W=1$. Focusing on $W=0.125$, we observe that the MinRes vs $\alpha$ dependence has a pronounced minimum:
  $54$ iterations for $\tfrac{1}{6} < \alpha < \tfrac{1}{5}$ (cf. $115$ iterations for $\alpha=1$) using $[\mathbb{P}_2]^2\times\mathbb{P}_1$, 
  $52$ iterations for $\tfrac{1}{4} < \alpha < \tfrac{1}{2}$ (cf. $87$ iterations for $\alpha=1$) using $[\mathbb{P}_2]^2\times\mathbb{P}_0$, and $33$ iterations for $\tfrac{1}{12} < \alpha < \tfrac{1}{8}$ using finite volumes (cf. $100$ iterations for $\alpha = 1$). In the condition
  number dependence, the curves are rather flat for $\alpha$ smaller than the minimizer value $\alpha^{*}$: condition number $7.4$ for $\alpha^{*}=0.25$
  (value $8.3$ is attained for $\alpha=\tfrac{1}{12}$ and $\alpha=1$ yields $23.5$) using $[\mathbb{P}_2]^2\times\mathbb{P}_1$ while with $[\mathbb{P}_2]^2\times\mathbb{P}_0$
  the minimal condition number of $7.4$ is obtained at $\alpha^*=\tfrac{1}{3}$ whereas $\alpha=\tfrac{1}{12}$ yields $9.4$ and $\alpha=1$ yields $14.0$. The minimal condition number of ca. $5.0$ is obtained for $\tfrac{1}{12} < \alpha < \tfrac{1}{8}$ using finite volumes (both for $\mathcal{B}$ and $\mathcal{B}_H$). Similar values of $\alpha$ are optimal for both $\mathcal{B}$ and $\mathcal{B}_H$.

  In summary, we observe that optimizing $\alpha$ in \eqref{eq:precond_espen} leads to significantly improved performance over the choice $\alpha = 1$.
  Moreover, the $\alpha$ derived with model order reduction techniques in \Cref{sec:coarse} appears close to optimal (and even optimal for the finite volume discretization).

  \begin{figure}[!htb]
    \centering
    \includegraphics[width=0.45\textwidth]{./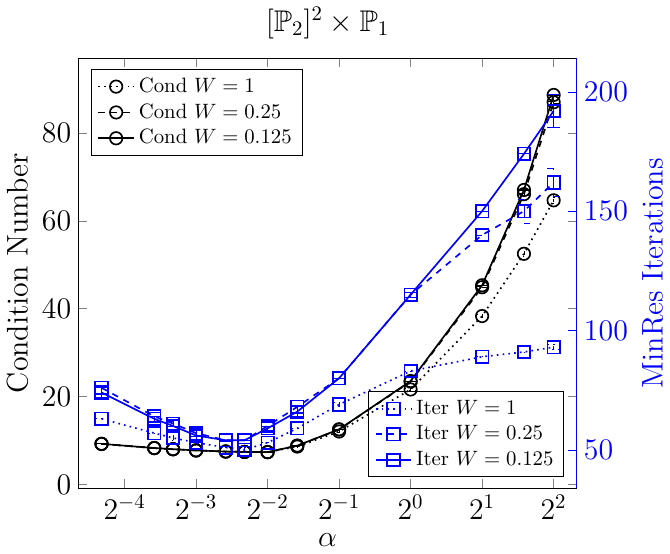}
    \hfill
    \includegraphics[width=0.45\textwidth]{./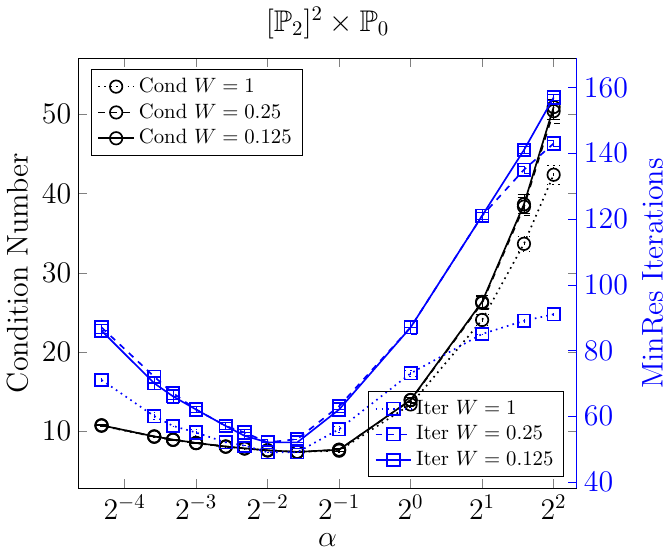}    
    \vspace{-10pt}
    \caption{
      Performance of preconditioners \eqref{eq:precond_espen} for 
      Stokes problem in the channel domain $(0, 10)\times(0, W)$ with no-slip boundary conditions on top and bottom.
      Discretization by (left) $[\mathbb{P}_2]^2\times\mathbb{P}_1$ and (right) $[\mathbb{P}_2]^2\times\mathbb{P}_0$ elements.
      In the subplots, condition numbers and MinRes iterations are plotted on the left and right vertical axes, respectively.
      The values are obtained on meshes with refinement level $l=4$ ($h=3\cdot10^{-2}$) with the errorbars representing
      difference to level $l=3$ results.
    }
    \label{fig:optimize}
  \end{figure}

  \begin{figure}[!htb]
    \centering
    \includegraphics[width=0.45\textwidth]{./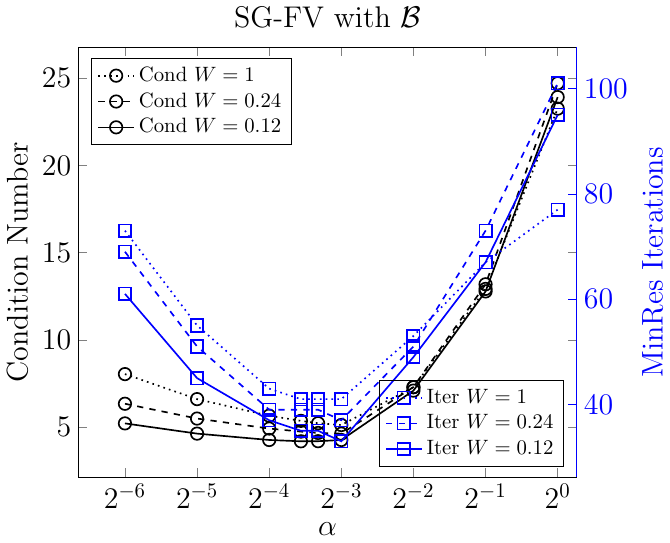}
    \hfill
    \includegraphics[width=0.45\textwidth]{./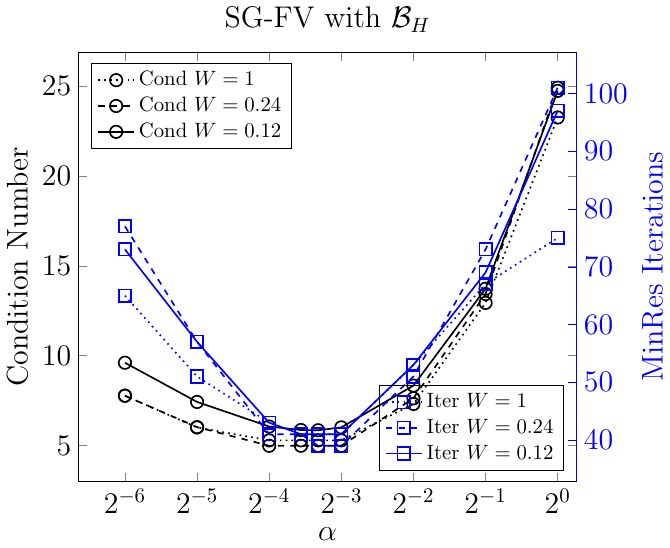}   
    \vspace{-10pt}
    \caption{
      Performance of preconditioners \eqref{eq:precond_espen} and \eqref{eq:precond_manfred} for 
      Stokes problem in the channel domain $(0, 10)\times(0, W)$ with no-slip boundary conditions on top and bottom. Discretization by staggered-grid finite volumes. In the subplots, condition numbers and MinRes iterations are plotted on the left and right vertical axes, respectively.
      The results are obtained on regular Cartesian grids with $h=0.02$.
    }
    \label{fig:optimize-fv}
  \end{figure}
  
\end{example}

The derived reduced Stokes flow model also suggests that the $\tfrac{1}{12}W^2$ scaling in $2d$ channel domains is relevant only in the long channel direction. Hence, it is insightful to investigate preconditioners of the form
\begin{equation}\label{eq:precond_aniso}
 \mathcal{B} = 
 \left(\begin{array}{cc}
      -\Delta^{-1} &  \\
       & I^{-1} + \left(-\nabla\cdot\left(\mbox{\scriptsize$\begin{bmatrix}\alpha_L&\\&\alpha_W\end{bmatrix}$}\DW^2\nabla\right)\right)^{-1} 
 \end{array}\right).
\end{equation}

\begin{example}[Anisotropic scaling]\label{ex:anistropy}
We consider Stokes flow \eqref{eq:stokes} in channel domains  $\Omega_W=(0, L)\times(0, W)$, $L=10$ and $W=1, 0.24, 0.12$. We use a staggered-grid finite-volume discretization~\cite{Harlow1965}. We fix $\alpha_L = \tfrac{1}{12}$ and set $\alpha_W = \tfrac{\beta}{12} (\tfrac{L}{W})^2$, with $\beta$ varied over several orders of magnitudes. We compute on a Cartesian grid with square elements of edge length $h = 0.02$.

The performance results for preconditioner \eqref{eq:precond_aniso} are shown in \cref{tab:aniso}.
\begin{table}[!htb]
    \centering
    \footnotesize
    \caption{Performance of anisotropic preconditioner \eqref{eq:precond_aniso} for Stokes flow in a straight channel. Discretization by staggered-grid finite volumes. Condition numbers and MinRes iterations (in parentheses).}
    \footnotesize{
    \begin{tabular}{c|c|c|c|c|c|c}
         \hline
\diagbox{$W$}{$\beta$} & 1000 & 100 & 10 & 1 & 0.1 & 0.01 \\ \hline
                     1 & $8.9$ ($59$) & $5.1$ ($41$) & $5.2$ ($41$) & $5.3$ ($41$) & $5.3$ ($41$) & $5.3$ ($41$) \\\hline
                  0.24 & $26.7$ ($187$) & $9.8$ ($85$) & $4.9$ ($39$) & $4.7$ ($39$) & $4.9$ ($39$) & $5.0$ ($39$) \\\hline
                  0.12 & $45.7$ ($342$) & $15.7$ ($161$) & $6.1$ ($53$) & $4.4$ ($33$) & $4.5$ ($33$) & $4.5$ ($33$)\\ \hline
    \end{tabular}}
    \label{tab:aniso}
\end{table}
We observe that the preconditioner is robust with respect to aspect ratio for $\beta \leq 1$.
A further decrease in $\beta$ (corresponding to neglecting derivatives in the short direction for the Laplace operator) does not impact performance.
This, in turn, means that since $W \ll L$, an isotropic Laplacian with scaling $\tfrac{1}{12}W^2$ (as previously shown) is robust. However, we can demonstrate
here that it does not seem necessary in practice to add a contribution to the mass matrix pressure preconditioner regarding the short direction. This is also reflected in the fact that preconditioner $\mathcal{B}_H$ in \cref{ex:channels} is robust in the domain aspect ratio, although its coarse grid pressure operator only contains contributions from derivatives in channel direction.
\end{example}


We finally apply the domain-robust preconditioners \eqref{eq:precond_espen} and \eqref{eq:precond_varW} to Stokes flow in $3d$ domains.
Unlike with previous $2d$ examples, the $3d$ setting will sometimes preclude the use of exact (LU-based)
preconditioners. To this end, we will also test approximating the presented preconditioners using algebraic multigrid (AMG).
\begin{example}[Tesla valve]\label{ex:3d}
  We consider the Stokes flow in a Tesla valve, a passive device/geometry that shows preferential directionality
  of the flow when Navier-Stokes equations are assumed, see e.g. \cite{nguyen2021early} and references therein.
  Here, the flow shall be driven by pressure drop as we set $\ffthree=-2\nn$, $\ffthree=-1\nn$ respectively on the inlet
  and outlet boundaries $\partial\Omega_N$. All the remaining boundaries have $\uu=\mathbf{0}$ prescribed and we set $\ff=\mathbf{0}$ in \eqref{eq:stokes1}.
  We let $W=0.2$ be the width/height of the valve's square cross-section. Further, the problem geometry shall be parameterized in terms of
  the number of segments/diodes ($S$). Concerning the Stokes inf-sup condition, we note that the distance between the inlet
  and outlet grows with the number of segments. A Tesla valve with $S=2$ segments is shown in \Cref{fig:tesla}. Here, the shortest
  distance between the inlet and outlet is approximately $10$.

  Following discretization by the lowest order Taylor-Hood ($[\mathbb{P}_2]^2\times\mathbb{P}_1$) elements we solve \eqref{eq:stokes}
  with preconditioned MinRes solver using $10^{-10}$ as the convergence threshold for the preconditioned residual norm. Depending on the
  refinement level and the number of segments, the system size here ranges between $8\cdot 10^3$ and $3.5\cdot10^6$ unknowns.
  In \Cref{tab:tesla} we report the number of iterations required for convergence with the preconditioner \eqref{eq:precond_espen} with $\alpha=1$ and $\alpha=\tfrac{1}{12}$, cf. \Cref{sec:coarse}. In both cases, both its exact
  realization is considered as well as an inexact one where the inverses are approximated in terms of one V-cycle of BoomerAMG\footnote{We apply the same (default)
  AMG settings to all discrete operators, i.e. $-\Delta^{-1}$ over $\VV_h$, and $I^{-1}$ and $-\Delta^{-1}$ over $Q_h$.} solver \cite{boomeramg}.
  We observe that all the preconditioners yield iteration counts that appear to be bounded in the number of segments. With the ``optimized''
  $\alpha=\tfrac{1}{12}$ the convergence is then clearly faster than with $\alpha=1$. The difference is
  especially noticeable when the AMG realizations are compared. Interestingly, the exact and inexact versions of the case $\alpha=\tfrac{1}{12}$
  perform similarly.

  We remark that with the standard Stokes preconditioner \eqref{eq:B_standard}, between $371$ and $471$ MinRes iterations are required with $S=1$ segment,
  while with $S=2$ the iterations exceed $600$. The corresponding estimated\footnote{Here the condition number estimates are obtained
  during the MinRes iterations from the Lanczos process instead of through separate computations of extremal eigenvalues as in the earlier
  examples.}
  condition numbers are then $8\cdot 10^3$ and $24\cdot 10^3$ respectively.

\begin{figure}
  \centering
    \includegraphics[width=1\textwidth]{./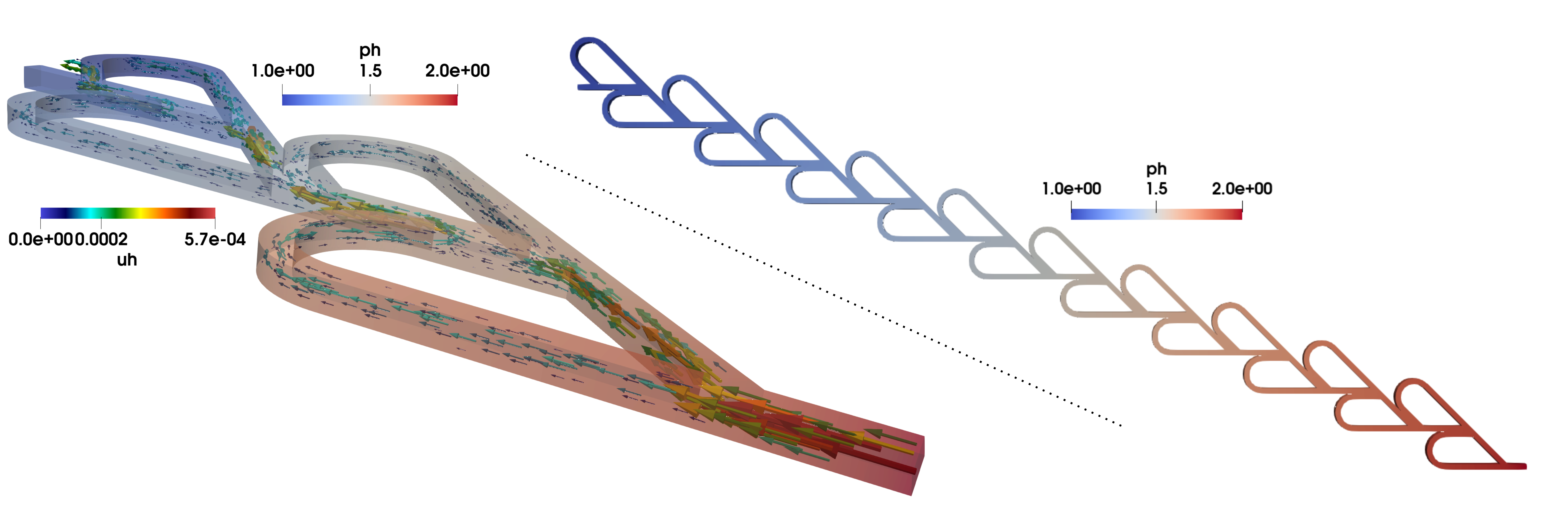}
    \vspace{-10pt}
    \caption{Pressure-driven Stokes flow in a $3d$ Tesla-valve-inspired domain with $2$ segments (left) and $10$ segments (right).}
    \label{fig:tesla}
  \end{figure}

\begin{table}
  \centering
  \caption{
    Number of MinRes iterations required for Stokes flow in Tesla valves (cf. \Cref{fig:tesla}) with different number of segments $S$.
    Exact (LU-based) and inexact (AMG-based) realizations of preconditioners \eqref{eq:precond_espen} a \eqref{eq:precond_varW} are
    compared. Shown in parentheses is the condition number estimated on the MinRes-built Krylov subspace. Refinement level is denoted by $l$,
    while -- indicates simulations that ran out of memory.
  }
  \footnotesize{
        \renewcommand{\arraystretch}{1.1}
        \setlength{\tabcolsep}{2pt}    
  \begin{tabular}{c|cccc||cccc}
    \hline
    & \multicolumn{4}{c||}{\eqref{eq:precond_espen} with $\alpha=1$ by LU} & \multicolumn{4}{c}{\eqref{eq:precond_espen} with $\alpha=1/12$ by LU} \\
    \hline
    \backslashbox{$S$}{$l$} & 1 & 2 & 3 & 4 & 1 & 2 & 3 & 4\\
    \hline
1 & 113(61.17) & 125(53.42) & 139(52.72) & 143(53.25) & 87(39.83) & 91(33.39) & 95(28.31) & 91(26.53)\\     
2 & 119(63.37) & 125(53.35) & 139(53.05) & 143(53.5)  & 87(40.38) & 89(31.18) & 95(29.38) & 91(25.83)\\     
3 & 119(63.41) & 125(53.56) & 139(53.27) & 143(53.68) & 87(39.93) & 89(31.75) & 95(28.49) & 91(26.38)\\     
4 & 117(64.01) & 125(53.81) & 139(53.21) & 143(53.71) & 89(40.69) & 91(37.53) & 95(28.68) & 91(26.42)\\     
8 & 119(63.03) & 125(53.87) & 139(53.45) & --        & 87(39.61) & 93(42.7) & 95(29.57) & --\\         
\hline
\hline
    & \multicolumn{4}{c||}{\eqref{eq:precond_espen} with $\alpha=1$ by AMG} & \multicolumn{4}{c}{\eqref{eq:precond_espen} with $\alpha=1/12$ by AMG} \\
    \hline
    \backslashbox{$S$}{$l$} & 1 & 2 & 3 & 4 & 1 & 2 & 3 & 4\\
    \hline
2 & 127(80.72) & 156(83.79) & 178(82.66) & 192(91.34) &   91(41.25) & 100(34.08) & 101(27.38) & 99(25.17)\\ 
4 & 131(82.91) & 164(91.92) & 185(99.28) & 198(95.16) &   93(43.43) & 98(32.06) & 103(28.27) & 100(24.55)\\ 
6 & 132(84.41) & 166(89.66) & 188(91.61) & 206(97.09) &   92(37.11) & 100(33.29) & 105(28.46) & 102(25.71)\\
8 & 140(88.43) & 162(86.98) & 198(100) & 210(107)     &   97(48.13) & 111(38.16) & 106(28.37) & 102(26.03)\\
16 & 141(89.67) & 203(91.7) & 208(120) & 216(111)      &  99(48.61) & 131(44.72) & 106(28.73) & 101(24.46)\\
\hline
  \end{tabular}}
  \label{tab:tesla}
\end{table}
\end{example}


\section{Acknowledgements}
All authors acknowledge funding by the European Research Council under grant 101141807 (aCleanBrain).
T.K. additionally acknowledges funding from the European Union’s Horizon 2020 Research and Innovation programme under the Marie Skłodowska-Curie Actions Grant agreement No 801133.
K.A.M additionally
 acknowledges funding by: 
 Stiftelsen Kristian Gerhard Jebsen via the K. G. Jebsen Centre
 for Brain Fluid Research; the national infrastructure for computational science in Norway Sigma2 via grant NN9279K; the Center of Advanced Study at the Norwegian Academy of Science and Letters under the program Mathematical Challenges in Brain Mechanics,  research stay at  ICERM, Brown under the program 
Numerical PDEs: Analysis, Algorithms, and Data Challenges, and the “Computational Hydrology project” a strategic Sustainability initiative at the Faculty of
607 Natural Sciences, UiO.

\bibliographystyle{siamplain}
\bibliography{references}

\appendix
\section{Convergence of applied discretizations}\label{sec:cvrg}
To verify the implementation of the Stokes solvers used throughout the manuscript,
we consider \eqref{eq:stokes} with $\Omega=(0, 2)\times(0, 1)$, with $\partial\Omega_D$
the top and bottom edges. The source term $\ff$ and boundary data $\uu_D$ and $\ffthree$
are computed based on the solution
\begin{equation}\label{eq:mms}
  p = \sin(2\pi(x-y)),\quad \uu=\left(\frac{\partial \phi}{\partial  y}, -\frac{\partial \phi}{\partial  x}\right)^T\quad\text{where}\quad\phi = \cos(\pi(2x-y)).
\end{equation}
The problem is discretized with conforming elements $[\mathbb{P}_2]^2\times\mathbb{P}_1$, $[\mathbb{P}_2]^2\times\mathbb{P}_0$
and non-conforming $[\mathbb{CR}_1]^2\times\mathbb{P}_0$ pair employing the facet stabilization \cite{burman2005stabilized}, where the implementation is based on FEniCS~\cite{logg2012automated}, or a staggered grid finite volume scheme~\cite{Harlow1965}, where the implementation is based on DUNE/DuMu\textsuperscript{x}~\cite{Dune210,Koch2021}. Velocity and pressure errors obtained on a series of uniformly
refined unstructured meshes are reported in \Cref{tab:cvrg}.

\renewcommand{\arraystretch}{1.2}
\begin{table}
  \centering
  \caption{
    Error convergence of different spatial discretization schemes used in this paper for the Stokes problem \eqref{eq:stokes}
    setup in \Cref{sec:cvrg}. The estimated order of convergence based on the current and previous refinement levels is shown in parentheses. The three finite element schemes use unstructured grids; the staggered-grid finite-volume scheme (FV-SG) uses a structured Cartesian grid. The $L^2$-norm for FV-SG is computed by numerical integration using a single quadrature node at the cell center. In this norm, second-order (super-)convergence is expected.
  }
  \footnotesize{
    \begin{tabular}{c||ccc}
      \hline
      & $[\mathbb{P}_2]^2\times\mathbb{P}_1$ & $[\mathbb{CR}_1]^2\times\mathbb{P}_0$ & $[\mathbb{P}_2]^2\times\mathbb{P}_0$\\
      \hline\hline
      $h$ & \multicolumn{3}{c}{$\lVert \uu - \uu_h \rVert_{\HH^1}$}\\
      \hline
1.85E-01 & 2.386E+00(--)   & 1.343E+01(--)    & 2.392E+00(--)   \\
1.08E-01 & 9.678E-01(1.67) & 8.616E+00(0.82)  & 9.746E-01(1.67) \\
5.10E-02 & 2.523E-01(1.79) & 4.408E+00(0.89)  & 2.585E-01(1.77) \\
2.68E-02 & 6.466E-02(2.12) & 2.211E+00(1.07)  & 7.052E-02(2.02) \\
1.35E-02 & 1.571E-02(2.06) & 1.105E+00(1.01)  & 2.113E-02(1.76) \\
6.76E-03 & 4.035E-03(1.96) & 5.524E-01(1.00)  & 8.141E-03(1.37) \\
3.30E-03 & 1.009E-03(1.93) & 2.763E-01(0.97)  & 3.679E-03(1.11) \\
      \hline
      $h$ & \multicolumn{3}{c}{$\lVert p - p_h \rVert_{L^2}$}\\
      \hline
1.85E-01 & 1.636E-01(--)   & 1.488E+00(--)   & 2.397E-01(--)    \\
1.08E-01 & 4.294E-02(2.48) & 7.344E-01(1.31) & 1.253E-01(1.20)\\
5.10E-02 & 7.742E-03(2.29) & 2.228E-01(1.59) & 5.742E-02(1.04)\\
2.68E-02 & 1.369E-03(2.70) & 6.717E-02(1.87) & 2.834E-02(1.10)\\
1.35E-02 & 2.383E-04(2.55) & 2.202E-02(1.63) & 1.416E-02(1.01)\\
6.76E-03 & 4.308E-05(2.47) & 8.598E-03(1.36) & 7.080E-03(1.00)\\
3.30E-03 & 9.749E-06(2.07) & 3.880E-03(1.11) & 3.540E-03(0.97)\\
\hline\hline
   


    & \multicolumn{3}{c}{FV-SG} \\
    \hline\hline
    $h$ & $\lVert p - p_h \rVert_{L^2}$ & $\lVert u_x - u_{x,h} \rVert_{L^2}$ & $\lVert u_y - u_{y,h} \rVert_{L^2}$ \\ \hline
    2.00e-01 & 1.84e+00(-) & 2.00e-01(-) & 2.74e-01(-)\\
1.00e-01 & 5.54e-01(1.73) & 4.17e-02(2.26) & 6.12e-02(2.16)\\
5.00e-02 & 1.50e-01(1.88) & 9.92e-03(2.07) & 1.44e-02(2.08)\\
2.50e-02 & 3.85e-02(1.96) & 2.45e-03(2.02) & 3.52e-03(2.04)\\
1.25e-02 & 9.70e-03(1.99) & 6.10e-04(2.00) & 8.71e-04(2.01)\\
6.25e-03 & 2.43e-03(2.00) & 1.52e-04(2.00) & 2.17e-04(2.01)\\
3.13e-03 & 6.08e-04(2.00) & 3.81e-05(2.00) & 5.41e-05(2.00)\\
\hline
  \end{tabular}
  \label{tab:cvrg}
}
\end{table}

\end{document}